\newcommand{\R}{\mathbb{R}}
\journalname{Bulletin of Mathematical Biology}
\begin{document}

\title{Stabilization of structured populations via vector target oriented control
\thanks{E. Braverman was partially supported by the NSERC research grant RGPIN-2015-05976.
D.~Franco  was partially supported by the Spanish Ministerio de Econom\'{\i}a y Competitividad and FEDER, grant 
MTM2013-43404-P.}
}

\titlerunning{Stabilization via vector target oriented control}        

\author{Elena Braverman       \and
         Daniel Franco  
}


\institute{E. Braverman \at
              Department of Mathematics and Statistics, University of
              Calgary, 2500 University Drive N.W., Calgary,
               AB T2N 1N4, Canada \\     
              \email{maelena@ucalgary.ca}           
           \and
           D. Franco \at
              Departamento de Matem\'atica Aplicada, E.T.S.I. 
              Industriales, Universidad Nacional de Educaci\'on a Distancia~(UNED), c/~Juan del Rosal~12, 28040, Madrid, Spain\\
              \email{dfranco@ind.uned.es}
}

\date{Received: date / Accepted: date}

\maketitle

\begin{abstract}
In contrast with unstructured 
models,  structured discrete 
population models have been able to fit and predict chaotic experimental data. 
However, most of the chaos control techniques in the literature have been designed and analyzed in a one-dimensional setting.  
Here, by introducing target oriented control for discrete dynamical systems,
we prove the possibility to stabilize a chosen state for a wide range of structured population models. 
The results are illustrated with introducing a control in the celebrated LPA model describing a flour beetle dynamics. 
Moreover, we show that the new control allows to stabilize periodic solutions for higher order difference equations, 
such as the delayed Ricker model, for which previous target oriented methods were not designed.

\keywords{Target oriented control \and Discrete population models \and 
Structured populations \and LPA model \and Delay Ricker model}
\end{abstract}

\section{Introduction}

In 1970ies, R. May showed in \cite{may1974biological,May1976} that certain one-dimensional discrete 
population models commonly used in theoretical ecology, can exhibit chaos  for some parameter values.
This fact immediately started two extremely interesting lines of research. 
The first line, and probably the hardest, is related to finding chaotic models fitting experimental data and, 
which is more important, predicting experiment outcomes. 
May himself, along with Hassel and Lawton, observed that there are species for which, in order to fit experimental 
data to a simple one-dimensional model, we have to consider parameters that generate chaotic dynamics 
\cite{hassell1976patterns}.
The task of predicting experimental data using these simple one-dimensional models appears to be hard \cite{morris1990problems} and, 
as far as we know, is an open problem. 
However, more elaborated  models, e.g. including  age structure or the interaction between different species, 
were able to fit and predict experimentally the detected chaos \cite{Costantino389,kot1993population}. 
The second line of research deals with whether and how this complexity can be controlled  \cite{sole1999controlling}. 
Several strategies have been proposed for controlling chaos in population dynamics, e.g. 
\cite{braverman2012stabilization,capeans2014less,Dattani,desharnais2001chaos,hilker2005control,liz2010control,parthasarathy1995controlling,sah2013stabilizing,segura2016adaptive,tung2014comparison}. 
Most of the control techniques proposed for discrete dynamical systems have been introduced and studied for scalar maps. 
However, the outcome of the study of experimental data and model-data matching process 
points out that stabilizing multidimensional maps is more of an interest and practical application. 

In the present paper, our purpose is to stabilize nonlinear chaotic dynamical systems given by the first order vector 
difference equation
\begin{equation}
\label{1}
\mathbf{x}_{n+1}=\mathbf{f} (\mathbf{x}_n), \quad n= 0,1,2,\dots , 
\end{equation}     
where $\mathbf{f}\colon D \to D$ is a continuous function, $D$ is a convex subset 
of $\R^d$, and $\mathbf{x}_0 \in D$ is the initial condition. 
This type of systems is suitable to describe multi-species models, 
as well as single species populations with a structure (e.g., an age structure given by different age stages, such as 
juvenile and adults, or a spatial structure given by a population living in different patches connected by dispersal), as 
well as physical models.
System \eqref{1} can exhibit chaotic behavior \cite{Marotto}, and chaos control of multidimensional systems 
is usually a harder problem than for scalar maps. 
See, for example,  the recent paper  \cite{LizPotzsche} on multidimensional prediction based control, where 
the challenges to stabilize the two-dimensional H\'{e}non map \cite{Henon1976} were outlined.

Here, we consider a natural extension of a method called  \emph{target oriented control} (TOC). 
This method was introduced in \cite{Dattani} for first order difference equations, i.e. when $d=1$. 
TOC establishes a target state and implements an increase or a decrease of the state variable each time step, 
depending on  whether its value exceeds or is below the target state 
\begin{equation}
\label{TOC_eq}
x_{n+1}=f( c T+(1-c)x_n ), \quad T\ge 0, \; c\in [0,1).
\end{equation}

In a population, to apply TOC we fix a target population size $T$ and cull/restock a fraction $c$ 
of the difference between the target 
and the actual population sizes. 
This fraction $c$ measures the control intensity. 
Therefore, TOC is a two-parameter control method in which the controller chooses $T$ and $c$. 
It is very interesting that recent experiments with fruit flies support that TOC, 
as other two-parameter control methods, has 
a better performance than one-parameter control techniques in enhancing simultaneously 
different ecological stability properties  \cite{tung2016simultaneous}. 
Indeed, in such experiments TOC concurrently reduced population fluctuations, 
decreased extinction probability and increased effective population size.

Some of these stabilization properties were explained  mathematically in \cite{Chaos2014,TPC}.  
For instance, it was proved that if the control intensity $c$ is close enough to one,  
TOC can provide global stabilization of a positive equilibrium for a wide class of smooth maps. 
In \cite{TPC}, it was also noticed  that if we describe the linear transformation of the variable
\begin{equation}
\label{phidef}
\phi(x)=cT+(1-c)x
\end{equation}
and consider the modified target oriented control (MTOC) 
\begin{equation}
\label{MTOC_eq}
x_{n+1}=cT+(1-c)f(x_n), \quad T\ge 0, \; c\in [0,1),
\end{equation}
then the global (local) asymptotic stability of the equilibrium $K_c$ of \eqref{TOC_eq} is 
equivalent to the global (local) asymptotic stability of the equilibrium $P_c=\phi(K_c)$ 
of \eqref{MTOC_eq}. 
In other words,  
the stability of the equilibrium is not altered by switching the time of control application: before or after
the reproduction period.

Here, we consider the natural extensions of TOC and MTOC to multidimensional systems: Vector  Target Oriented Control (VTOC)
\begin{equation}
\label{2a}
\mathbf{x}_{n+1}=\mathbf{f}(c\mathbf{T}+(1-c)\mathbf{x}_n), \quad \mathbf{T}\in D, \; c\in [0,1),
\end{equation}
as well as Vector Modified Target Oriented Control (VMTOC)
\begin{equation}
\label{2}
\mathbf{x}_{n+1}=c\mathbf{T}+(1-c)\mathbf{f}(\mathbf{x}_n), \quad \mathbf{T} \in D, \; c\in [0,1).
\end{equation}
Our main results give sufficient conditions for the local and global stabilization of either an equilibrium
or some other target vector state 
and estimate the minimum control intensity $c^{\ast}$ to attain such stabilization using these new methods. 
Regular \eqref{2a} and modified \eqref{2} vector target oriented controls are topologically conjugate 
(see Lemma \ref{lemma3} in the Appendix), which implies that from a stability perspective both systems have the same properties. 
Since the results are focused on  stability properties, from now on we restrict ourselves to VMTOC without loss of 
generality.

The paper \cite{cushing2002chaos} is perhaps the best example of the intersection of the two lines of research described in 
the first paragraph of this introduction. 
There, Cushing et al. put forward a control method to stabilize fluctuations of an insect population with three 
age stages: larvae, pupae and adults. 
The method was based on the analysis of the chaotic attractor of the theoretical model 
known to describe the population dynamics. 
In order to illustrate our results, we consider the same model, called LPA model, 
showing that, at least theoretically, VMTOC globally stabilizes an equilibrium if the control intensity is high enough. 
The main advantages of VMTOC, compared to the method presented in \cite{cushing2002chaos}, are its simplicity, since VMTOC does 
not need any information about the chaotic attractor, and flexibility, since any age-stage configuration can be stabilized. 

We also use the LPA model to illustrate that the selection of the target can have important consequences, 
not previously reported for target oriented control. 
We show that depending on the selection of the target vector $\mathbf{T}$, 
an increase of the control intensity $c$ may not always be stabilizing due to the presence of bubbles. 

VMTOC is not the first extension of target oriented control.  
Indeed, TOC has been recently extended to higher order difference equations \cite{braverman2015stabilization}. 
Equations of this type naturally arise when considering populations with non-overlapping generations but with 
multi-seasonal interactions \cite{levin1976note}. 
Since higher order difference equations can be rewritten as a vector map, 
our results here are also applicable to this particular case. 
If the target $\mathbf{T}$ is a vector with equal coordinates, the 
stabilization scheme of \cite{braverman2015stabilization} can be obtained as a particular case of the results of the present 
paper.
However, stabilizing  a certain vector state in $\R^d$ with non-equal coordinates 
corresponds to a $d$-periodic orbit stabilization for the original higher order difference equation. 
This opens the possibility to stabilize periodic orbits in delayed population models, 
which seemed not possible with the approach presented in \cite{braverman2015stabilization}. 

Let us note that the method developed in the present paper allows to effectively deal not only with chaotic 
but also with multi-stable systems, as well as control oscillation amplitudes.

The paper is organized as follows. In Section~\ref{Sec_calculus} some auxiliary results are collected: the fact that we can 
combine two or more VMTOCs to obtain a control of the same type,
and that, with an appropriate combination of the target vector $\mathbf{T}$ and the control intensity $c$, 
we can get any vector in the  interior of the domain of $\mathbf{f}$ as an equilibrium (in 
fact, there is an infinite number of such $c,\mathbf{T}$ pairs). 
Moreover, we present sufficient conditions for VMTOC to have at least one non-trivial equilibrium for all control intensities 
$c\in (0,1)$.  
In Section \ref{Sec_stability} we justify the possibility of local and global stabilization of an equilibrium of the uncontrolled system 
or of any prescribed vector in $D$. 
Section \ref{Applic} illustrates the results with two different applications: LPA model and the delayed Ricker model.
The discussion section summarizes the results obtained and further possible developments. 
The auxiliary result on the equivalence of VTOC and VMTOC is postponed to the appendix.

\section{Calculus of VMTOCs}
\label{Sec_calculus}

Since we are interested in the capacity of VMTOC to stabilize an equilibrium, 
we begin by showing that under quite general conditions on 
$\mathbf f$ and $D$, such an equilibrium exists. 
Indeed, if $\mathbf{f}: D \to D$ is continuous and the set $D$ is convex and compact, 
then Brouwer Fixed-point Theorem 
implies the existence of at least one equilibrium point of VMTOC in $D$  for every $c \in (0, 1)$  and any $\mathbf{T} \in D$. 
However,  $D$ is not always compact. 
For example, in population models, where each component of $\mathbf{x}$ corresponds to a certain population size, 
it is natural to assume  that $D={\mathbb R}^d_{+}$, ${\mathbb R}_{+}=[0,\infty)$ but also that, due to the competition for 
resources, the inequality 
$\|\mathbf{f(x)}\|\le \|\mathbf{x}\|$ holds when $\|\mathbf{x}\|$ is large, where  $\|\cdot\|$ is an arbitrary fixed norm in $\R^d$.  
The next result shows that in this situation VMTOC has a nontrivial  equilibrium for every $c \in (0, 1)$ as well.

\begin{lemma} 
	\label{lemma_exist}
	Assume that $\mathbf{f}\colon \mathbb{ R}^d_+\to \mathbb{ R}^d_+$ is continuous and there exists a positive constant $M$ such that 
	$\mathbf{\|f(x)\| \le \| x\|}$ holds for any $\mathbf{x}\in \mathbb{ R}^d_+$ with $\|\mathbf{x}\|\ge M$. 
	Then for any  target $\mathbf{T}\in \mathbb{R}^d_{+}\setminus \{\mathbf{0} \}$ and $c \in (0, 1)$, 
	there exists at least one equilibrium $\mathbf{x}^{\ast} \in \mathbb R^d_+$ of VMTOC satisfying
	$
	0<\|{\mathbf x}^{\ast}\|<\max\{M,\|\mathbf{T}\|\}. 
	$
	Moreover, if all the components of $\mathbf{T}$ are non-zero, then for all $c \in (0, 1)$ the equilibria of  VMTOC have all their components positive. 
\end{lemma}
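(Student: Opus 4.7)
The plan is to cast equilibria of VMTOC as fixed points of the continuous self-map $\mathbf{g}(\mathbf{x}):=c\mathbf{T}+(1-c)\mathbf{f}(\mathbf{x})$ of $\R^d_+$, invoke Brouwer's Fixed Point Theorem on a suitable compact convex subset, and then read off non-triviality, the norm bound, and the componentwise positivity directly from the fixed-point identity.

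First I would construct a compact convex invariant set. Since $\mathbf{f}$ is continuous and $\{\mathbf{x}\in\R^d_+:\|\mathbf{x}\|\le M\}$ is compact, the number $K:=\sup\{\|\mathbf{f}(\mathbf{x})\|:\|\mathbf{x}\|\le M\}$ is finite. Set $R:=\max\{M,K,\|\mathbf{T}\|\}$ and $B:=\{\mathbf{x}\in\R^d_+:\|\mathbf{x}\|\le R\}$; this set is convex as the intersection of the positive orthant with the closed ball in the fixed norm, and it is compact. The growth assumption together with the definition of $K$ yields $\|\mathbf{f}(\mathbf{x})\|\le R$ for every $\mathbf{x}\in B$, so
\[
\|\mathbf{g}(\mathbf{x})\|\le c\|\mathbf{T}\|+(1-c)R\le R,
\]
and $\mathbf{g}(\mathbf{x})\in\R^d_+$ because both summands lie there. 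Brouwer's theorem then delivers a fixed point $\mathbf{x}^*\in B$, which is an equilibrium of VMTOC.

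Next I would verify the three remaining claims. Non-triviality of $\mathbf{x}^*$ follows because in the identity $\mathbf{x}^*=c\mathbf{T}+(1-c)\mathbf{f}(\mathbf{x}^*)$ the two summands have only non-negative entries with $c\mathbf{T}\neq\mathbf{0}$, so no coordinate can be cancelled and $\|\mathbf{x}^*\|>0$. For the upper bound, I split on $\|\mathbf{x}^*\|$: if $\|\mathbf{x}^*\|<M$ the bound is trivial, while if $\|\mathbf{x}^*\|\ge M$ the growth assumption combined with the triangle inequality applied to the fixed-point identity gives $\|\mathbf{x}^*\|\le c\|\mathbf{T}\|+(1-c)\|\mathbf{x}^*\|$, i.e.\ $\|\mathbf{x}^*\|\le\|\mathbf{T}\|$. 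Finally, when every $T_i>0$, reading the identity coordinatewise as $x^*_i=cT_i+(1-c)f_i(\mathbf{x}^*)$ with $cT_i>0$ and $(1-c)f_i(\mathbf{x}^*)\ge 0$ gives $x^*_i>0$ for each $i$.

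The main subtle point is upgrading $\|\mathbf{x}^*\|\le\max\{M,\|\mathbf{T}\|\}$ to the strict inequality appearing in the statement: in the case $\|\mathbf{x}^*\|\ge M$ the argument above produces equality only if the triangle inequality saturates and $\|\mathbf{f}(\mathbf{x}^*)\|=\|\mathbf{x}^*\|$ simultaneously, which is not ruled out by the hypotheses for every norm. I would handle this either by working on a slightly enlarged ball of radius $R+\varepsilon$ and then observing that any equilibrium with $\|\mathbf{x}^*\|=\max\{M,\|\mathbf{T}\|\}$ forces the growth assumption to be saturated at a specific point, or by a direct boundary analysis; the existence portion itself is a clean Brouwer application once the invariant ball $B$ has been identified.
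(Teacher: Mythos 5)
Your argument is correct in substance but follows a genuinely different route from the paper. You build a compact convex set $B=\{\mathbf{x}\in\R^d_+:\|\mathbf{x}\|\le R\}$ that $\mathbf{g}(\mathbf{x})=c\mathbf{T}+(1-c)\mathbf{f}(\mathbf{x})$ maps into itself and apply Brouwer's theorem, then extract non-triviality, the norm bound and componentwise positivity from the fixed-point identity. The paper instead applies Krasnosel'ski\u{\i}'s fixed-point theorem for cone-compressing operators on an annular region: it shows $\|\mathbf{g}(\mathbf{x})\|\ge\|\mathbf{x}\|$ on a small sphere $\|\mathbf{x}\|=m$ (using $\|\mathbf{g}(\mathbf{0})\|>0$ and continuity) and $\|\mathbf{g}(\mathbf{x})\|\le\|\mathbf{x}\|$ for $\|\mathbf{x}\|\ge\max\{M,\|\mathbf{T}\|\}$, obtaining a fixed point between the two spheres. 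The cone-theoretic machinery is used there precisely to keep the fixed point away from the origin; your observation that any fixed point automatically satisfies $x^*_i\ge cT_i>0$ for an index with $T_i>0$ makes that machinery unnecessary, so your route is more elementary and, in my view, cleaner. The price you pay is the introduction of the auxiliary radius $K=\sup\{\|\mathbf{f}(\mathbf{x})\|:\|\mathbf{x}\|\le M\}$, which is harmless, and the fact that Brouwer only localizes the fixed point in $B$ rather than in the annulus, so the upper norm bound must be re-derived from the identity, as you do.

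The one point you leave open --- upgrading $\|\mathbf{x}^*\|\le\max\{M,\|\mathbf{T}\|\}$ to a strict inequality --- is a real issue, but it is an issue with the statement rather than with your proof: for $d=1$, $f(x)=x$, any $M\in(0,1)$, $T=1$ and $c=1/2$, the hypotheses hold and the unique equilibrium is $x^*=1=\max\{M,\|T\|\}$, so the strict upper bound fails. Neither of your two suggested repairs (enlarging the ball, boundary analysis) can work in general, and the paper's own application of the cone-compression theorem likewise only delivers a fixed point in the closed annulus unless extra hypotheses are imposed. The honest conclusion obtainable under the stated assumptions is $0<\|\mathbf{x}^*\|\le\max\{M,\|\mathbf{T}\|\}$, which your argument establishes completely; the strict version holds as soon as one replaces $M$ by any larger constant exceeding $\|\mathbf{T}\|$, since the hypothesis is preserved under enlarging $M$.
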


\begin{remark}
\label{remark1}
It is interesting to note the practical consequences of Lemma~\ref{lemma_exist}.
A controller can select the target $\mathbf{T}$ depending on the aimed result. 
If the system \eqref{1} models the interactions among different species, 
and we aim to eradicate one of them while keeping the others, 
then it is natural to select $\mathbf{T}$ with a zero component, 
and such that the fixed point belongs to the boundary of $\mathbb R^d_+$. 
Whereas if the system \eqref{1} models the interactions between different age stages of the same species, 
and we aim to reduce the fluctuations,  
then $\mathbf T$ will belong to the interior of  $\mathbb R^d_+$ and, by Lemma \ref{lemma_exist},  the equilibria of VMTOC as well.
\end{remark}

\begin{remark}
\label{remark_add}
Note that the original map $\mathbf{f}$ in Lemma~\ref{lemma_exist} does not need to have a fixed point  
in $\mathbb{R}^d_{+}\setminus \{\mathbf{0} \}$, for example, 
the unique fixed point of $\mathbf{f}(\mathbf{x})=\alpha \mathbf{x}$, $\alpha \in (0,1)$, is $\mathbf{0}$.
However, with any nontrivial target $\mathbf T$, the equilibrium of the controlled map becomes non-trivial.
\end{remark}

Next, we consider the possibility to have an arbitrary vector state in  the interior of 
$D$ as an equilibrium in controlled model \eqref{2}.
\begin{lemma}
	\label{lemma2}
	Let $\mathbf{f} \colon D \to D$ be a continuous function, where $D$ is convex. Then for any $\mathbf{K} \in D$, $\mathbf{K} \not\in \partial D$ there exist 
	$c_\mathbf{K}\in (0,1)$
	 and $\mathbf{T_K}\in D$ such that $\mathbf{K}$ is an equilibrium of $\mathbf{g}(\mathbf{x})=c_\mathbf{K} \mathbf{T_K}+(1-c_\mathbf{K})\mathbf{f}(\mathbf{x})$.
\end{lemma}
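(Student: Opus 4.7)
The plan is to convert the existence statement into an explicit formula for the target in terms of the control intensity, and then exploit that $\mathbf{K}$ lies in the interior of $D$.

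First I would write the equilibrium condition $\mathbf{K}=c\mathbf{T}+(1-c)\mathbf{f}(\mathbf{K})$ and solve it algebraically for $\mathbf{T}$. Since $c\in(0,1)$, the coefficient of $\mathbf{T}$ is nonzero, and the unique solution is
\begin{equation*}
\mathbf{T}(c)=\mathbf{K}+\frac{1-c}{c}\bigl(\mathbf{K}-\mathbf{f}(\mathbf{K})\bigr).
\end{equation*}
So the question reduces to: can we choose $c_\mathbf{K}\in(0,1)$ such that $\mathbf{T}(c_\mathbf{K})\in D$?

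Next I would split into two cases. If $\mathbf{f}(\mathbf{K})=\mathbf{K}$, then $\mathbf{T}(c)=\mathbf{K}\in D$ for every $c\in(0,1)$, so any choice works. Otherwise, I would use the assumption $\mathbf{K}\notin\partial D$: since $D$ is convex and $\mathbf{K}$ lies in the interior of $D$ (with respect to the affine hull of $D$, but in this paper $D$ is a full-dimensional convex subset of $\mathbb{R}^d$, so the usual interior works), there is some $\delta>0$ with $B(\mathbf{K},\delta)\subseteq D$. Because $\|\mathbf{T}(c)-\mathbf{K}\|=\tfrac{1-c}{c}\|\mathbf{K}-\mathbf{f}(\mathbf{K})\|\to 0$ as $c\to 1^-$, one can pick $c_\mathbf{K}\in(0,1)$ close enough to $1$ to force $\mathbf{T}(c_\mathbf{K})\in B(\mathbf{K},\delta)\subseteq D$. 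Setting $\mathbf{T_K}:=\mathbf{T}(c_\mathbf{K})$ then gives $\mathbf{g}(\mathbf{K})=c_\mathbf{K}\mathbf{T_K}+(1-c_\mathbf{K})\mathbf{f}(\mathbf{K})=\mathbf{K}$ by construction.

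The only real obstacle is the bookkeeping that ensures $\mathbf{T_K}\in D$, and that is handled entirely by the interiority of $\mathbf{K}$ together with the fact that the correction term has a vanishing multiplier as $c\uparrow 1$. Note also that the construction shows there is in fact a one-parameter family of admissible pairs $(c_\mathbf{K},\mathbf{T_K})$, matching the remark in the introduction that infinitely many such pairs exist.
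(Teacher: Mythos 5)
Your proposal is correct and follows essentially the same route as the paper: solving the equilibrium condition for $\mathbf{T}$ gives $\mathbf{T}(c)=\mathbf{f}(\mathbf{K})+\tfrac{1}{c}\bigl(\mathbf{K}-\mathbf{f}(\mathbf{K})\bigr)$, which is exactly the paper's ray parametrized by $\alpha=1/c>1$, and both arguments use the interiority of $\mathbf{K}$ to place $\mathbf{T}(c)$ in $D$ for $c$ close to $1$. The separate treatment of the fixed-point case $\mathbf{f}(\mathbf{K})=\mathbf{K}$ also matches the paper.
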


As easily follows from the proof of Lemma~\ref{lemma2}, there are infinitely many pairs $(c_\mathbf{K},\mathbf{T_K})$. For any $c_\mathbf{K} \in (0,1)$,
$\mathbf{T_K}$ is unique, but depending on the geometry of $D$, $\mathbf{T_K}$ can be an arbitrary point on the ray starting at $\mathbf{K}$ and $\mathbf{K}-\mathbf{f(K)}$-directed. 
The closer is $c_\mathbf{K}$ to 1, the smaller the distance between $\mathbf{T_K}$ and $\mathbf{K}$ is. 
In other words, we could choose either a 
larger target and a weaker control, or a smaller target but a tighter control. 
Of course, this election can affect the stability of the equilibrium $\mathbf{K}$ of $\mathbf{g}$. 
In the following section we will consider such stability.  
But before that, we present our last auxiliary result on the effect of combining two different VMTOCs.

\begin{lemma}
	\label{lemma1}
	Let $D$ be convex. Then a combination of two VMTOCs is a VMTOC.
\end{lemma}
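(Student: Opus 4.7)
The plan is a direct computation: compose the two VMTOCs and identify the resulting map with the canonical VMTOC form, then check that the combined target lies in $D$ and the combined intensity lies in $[0,1)$.

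First I would define the individual controls $\mathbf{g}_1(\mathbf{x})=c_1\mathbf{T}_1+(1-c_1)\mathbf{f}(\mathbf{x})$ and then apply a second VMTOC to $\mathbf{g}_1$ with parameters $c_2\in[0,1)$ and $\mathbf{T}_2\in D$ to produce
\begin{equation*}
\mathbf{g}_2(\mathbf{x})=c_2\mathbf{T}_2+(1-c_2)\mathbf{g}_1(\mathbf{x})=c_2\mathbf{T}_2+(1-c_2)c_1\mathbf{T}_1+(1-c_1)(1-c_2)\mathbf{f}(\mathbf{x}).
\end{equation*}
The natural ansatz is to set $1-c:=(1-c_1)(1-c_2)$, equivalently $c=c_1+c_2-c_1c_2$, so that $\mathbf{g}_2(\mathbf{x})=c\mathbf{T}+(1-c)\mathbf{f}(\mathbf{x})$ with
\begin{equation*}
\mathbf{T}=\frac{(1-c_2)c_1}{c_1+c_2-c_1c_2}\,\mathbf{T}_1+\frac{c_2}{c_1+c_2-c_1c_2}\,\mathbf{T}_2.
\end{equation*}

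Next I would verify the three conditions that make this a genuine VMTOC. The intensity $c=1-(1-c_1)(1-c_2)$ clearly lies in $[0,1)$ whenever $c_1,c_2\in[0,1)$, since $(1-c_1)(1-c_2)\in(0,1]$. The two coefficients of $\mathbf{T}_1$ and $\mathbf{T}_2$ are nonnegative, and a short algebraic check shows they sum to $1$ (the numerators add to $c_1-c_1c_2+c_2$, which equals the denominator), so $\mathbf{T}$ is a convex combination of $\mathbf{T}_1,\mathbf{T}_2\in D$. Here is where convexity of $D$ is used: it guarantees $\mathbf{T}\in D$, which is precisely the admissibility condition on the target.

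There is no real obstacle here beyond the book-keeping; the only subtlety is handling the degenerate case $c_1=c_2=0$, in which $c=0$ and the target $\mathbf{T}$ drops out of the equation (the formula above is then indeterminate, but $\mathbf{g}_2=\mathbf{f}$ is trivially a VMTOC with intensity $0$ and arbitrary target in $D$). An obvious induction then extends the result to any finite number of VMTOCs, which I would mention briefly at the end.
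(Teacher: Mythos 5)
Your proof is correct and follows essentially the same route as the paper: compose the two affine controls, set $1-c=(1-c_1)(1-c_2)$, and observe that the resulting target is a convex combination of $\mathbf{T}_1$ and $\mathbf{T}_2$, hence in $D$ by convexity. The only difference is cosmetic: the paper sidesteps the degenerate case by restricting to $c_1,c_2\in(0,1)$ at the outset, whereas you handle $c_1=c_2=0$ explicitly, which is fine.
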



\section{Stabilization of an equilibrium}
\label{Sec_stability}

\subsection{Local stabilization }

When a sufficiently strong control is implemented,   
VMTOC  locally  asymptotically stabilizes any finite equilibrium.

\begin{theorem}
	\label{p_esta}
	Assume that $\mathbf{f}=(f_1,\dots,f_d)\colon D \to D$ is continuously differentiable and that, for a fixed compact set $S\subset D$,
	VMTOC with a target $\mathbf{T}\in D$ has at least one equilibrium in $S$ for every $c \in (0, 1)$. Define $A=\min \{ B,C \} $,
	where
	\[
	B=  \max_{\mathbf{x}\in S,\ j=1,\dots, d} \sum_{i=1}^{d} \left | \frac{\partial f_{j}}{\partial x_i}(\mathbf{x})\right |
	\quad 
	\mbox{ and } 
	\quad
	C=\max_{\mathbf{x}\in S,\ j=1,\dots,d}\sum_{i=1}^{d} \left | \frac{\partial f_{i}}{\partial x_j}(\mathbf{x}) \right | 
	\]
	
	Then, all equilibria of VMTOC in $S$ are  locally asymptotically stable for $c\in  (c^*, 1)$ with 
	\begin{equation}
	\label{boundc}
	c^* = \max\left\{ 0,1-\frac{1}{A} \right\}.
	\end{equation}   
\end{theorem}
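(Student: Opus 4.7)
The plan is to reduce the statement to a linearization argument plus a standard matrix-norm estimate for the spectral radius of the Jacobian at the equilibrium.

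First, I would fix an equilibrium $\mathbf{x}^{\ast}\in S$ of the controlled map $\mathbf{g}_c(\mathbf{x})=c\mathbf{T}+(1-c)\mathbf{f}(\mathbf{x})$ and compute its Jacobian. Because the target term $c\mathbf{T}$ is constant and $\mathbf{f}$ is $C^1$, the chain rule gives
\begin{equation*}
J_{\mathbf{g}_c}(\mathbf{x}^{\ast}) \;=\; (1-c)\,J_{\mathbf{f}}(\mathbf{x}^{\ast}).
\end{equation*}
By the standard principle of linearized stability for $C^1$ maps, $\mathbf{x}^{\ast}$ is locally asymptotically stable whenever the spectral radius $\rho\bigl(J_{\mathbf{g}_c}(\mathbf{x}^{\ast})\bigr)<1$, i.e.\ whenever $(1-c)\,\rho\bigl(J_{\mathbf{f}}(\mathbf{x}^{\ast})\bigr)<1$.

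The next step is to bound $\rho\bigl(J_{\mathbf{f}}(\mathbf{x}^{\ast})\bigr)$ uniformly over $S$ by the quantities $B$ and $C$. I would observe that $B$ is nothing but the supremum over $S$ of the operator norm of $J_{\mathbf{f}}$ induced by $\|\cdot\|_{\infty}$ on $\mathbb{R}^d$ (maximum absolute row sum), while $C$ is the supremum of the operator norm induced by $\|\cdot\|_1$ (maximum absolute column sum). Since the spectral radius is dominated by every induced matrix norm, for each $\mathbf{x}^{\ast}\in S$ one has
\begin{equation*}
\rho\bigl(J_{\mathbf{f}}(\mathbf{x}^{\ast})\bigr)\;\le\;\min\bigl\{\|J_{\mathbf{f}}(\mathbf{x}^{\ast})\|_{\infty},\|J_{\mathbf{f}}(\mathbf{x}^{\ast})\|_{1}\bigr\}\;\le\;\min\{B,C\}\;=\;A.
\end{equation*}

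Combining these two ingredients, I get $\rho\bigl(J_{\mathbf{g}_c}(\mathbf{x}^{\ast})\bigr)\le (1-c)A$, which is strictly less than $1$ precisely when $c>1-1/A$, provided $A>0$. If $A\le 1$, then $(1-c)A<1$ already for every $c\in(0,1)$, so the sharp threshold becomes $c^{\ast}=\max\{0,1-1/A\}$, matching \eqref{boundc}. Because the bound on $\rho(J_{\mathbf{f}})$ is uniform in $\mathbf{x}^{\ast}\in S$, the same choice of $c^{\ast}$ works simultaneously for \emph{all} equilibria of VMTOC contained in $S$.

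The only delicate point I foresee is the uniformity: one must take the maximum over $S$ rather than a pointwise bound, since the equilibrium $\mathbf{x}^{\ast}$ generally depends on $c$ and $\mathbf{T}$ and could slide around inside $S$ as these parameters vary. Once $S$ is fixed and the suprema $B$ and $C$ are finite (guaranteed by compactness of $S$ together with continuity of the partial derivatives), the argument goes through. The rest is just the standard linearization theorem and the elementary inequality $\rho(M)\le \|M\|$ for any induced norm.
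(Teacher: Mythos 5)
Your proposal is correct and follows essentially the same route as the paper: the paper also computes $J_{\mathbf{g}_c}=(1-c)J_{\mathbf{f}}$ and invokes the standard eigenvalue bound (Horn--Johnson, Corollary~6.1.5) by the maximum row-sum and column-sum norms, which is exactly your $\rho(M)\le\min\{\|M\|_\infty,\|M\|_1\}$ argument. Your explicit remark on uniformity over $S$ is a welcome clarification of a step the paper leaves implicit, but it is not a different method.
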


\begin{remark}
	\label{remark_loc_stability}
	If $\mathbf{T}$ is chosen as an equilibrium $\mathbf{K}$ of $\mathbf{f}$, then $\mathbf{K}$ is also an equilibrium of VMTOC 
	for every $c \in (0, 1)$. In such a case,  in Theorem \ref{p_esta} we can take $S=\{\mathbf{K}\}$,  and in the 
	definition of $c^*$ we can replace $A$ by $\rho(J\mathbf{f}(\mathbf{K}))$, 
	where $\rho$ denotes the spectral radius of a matrix. 
\end{remark}

\subsection{Global stabilization}

First, we consider the case when an equilibrium of the uncontrolled system is to be stabilized, that is, it satisfies 
\begin{equation}   
\label{fixed}
\mathbf{K=f(K)}.
\end{equation}
In this section, we assume that there exists $L>0$ such that
\begin{equation} 
\label{3}
\| \mathbf{f(x)-K} \| \leq L \| \mathbf{x-K} \|, ~~\mathbf{x} \in D.
\end{equation} 
Note that  \eqref{3} yields that $\mathbf{K}$ is a fixed point of $\mathbf{f}$.

Our first result  gives not only a sufficient global stabilization 
condition but also an estimate of the control 
intensity necessary to achieve it, which depends on the constant $L$ in condition \eqref{3}.  

\begin{theorem}
	\label{th1}
	Assume that for a continuous function $\mathbf{f} \colon D \to D$ equality \eqref{fixed} and  inequality \eqref{3} hold, where $D 
\subseteq \R^d$ is convex, $\mathbf{K} \in D$ and $\mathbf{T=K}$. 
	
	If $L<1$, then all solutions of \eqref{2} with an initial condition $\mathbf{x}_0 \in D$ converge to $\mathbf{K}$ for 
	any  $c \in [0,1)$.
	
	If $L \geq 1$, then there exists $c^* \in (0,1)$ such that for $c \in (c^*,1)$ 
	all solutions of \eqref{2} with an initial condition $\mathbf{x}_0 \in D$ converge to $\mathbf{K}$. 
\end{theorem}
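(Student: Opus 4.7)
The plan is to establish a direct contraction estimate for the controlled map around $\mathbf{K}$, iterate it, and read off the threshold $c^*$ from the resulting condition. The key observation is that because $\mathbf{T}=\mathbf{K}$ and $\mathbf{K}$ is already a fixed point of $\mathbf{f}$, we have the identity
\begin{equation*}
\mathbf{x}_{n+1}-\mathbf{K}=c\mathbf{K}+(1-c)\mathbf{f}(\mathbf{x}_n)-\mathbf{K}=(1-c)\bigl(\mathbf{f}(\mathbf{x}_n)-\mathbf{K}\bigr),
\end{equation*}
which collapses the target term and leaves a pure rescaling of $\mathbf{f}(\mathbf{x}_n)-\mathbf{K}$.

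First I would verify invariance: since $D$ is convex, $\mathbf{K}\in D$, and $\mathbf{f}(\mathbf{x}_n)\in D$, the convex combination $\mathbf{x}_{n+1}$ stays in $D$, so an inductive argument shows $\mathbf{x}_n\in D$ for every $n$ and hypothesis \eqref{3} is available at each iterate. Combining the identity above with \eqref{3} yields
\begin{equation*}
\|\mathbf{x}_{n+1}-\mathbf{K}\|=(1-c)\|\mathbf{f}(\mathbf{x}_n)-\mathbf{K}\|\leq (1-c)L\,\|\mathbf{x}_n-\mathbf{K}\|,
\end{equation*}
and iterating gives $\|\mathbf{x}_n-\mathbf{K}\|\leq ((1-c)L)^n\|\mathbf{x}_0-\mathbf{K}\|$.

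Convergence to $\mathbf{K}$ is therefore ensured whenever the contraction factor $(1-c)L$ lies in $[0,1)$. In the case $L<1$, this factor is at most $L<1$ for every $c\in[0,1)$ (including $c=0$), which gives the first statement. In the case $L\geq 1$, solving $(1-c)L<1$ for $c$ produces the threshold $c^{\ast}=1-1/L$ when $L>1$; for $L=1$ any $c^{\ast}\in(0,1)$ works since $(1-c)\cdot 1=1-c<1$ for $c>c^{\ast}$. In either subcase, for every $c\in(c^{\ast},1)$ the geometric estimate forces $\mathbf{x}_n\to\mathbf{K}$ regardless of $\mathbf{x}_0\in D$.

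I do not expect a serious obstacle here: the proof is essentially a one-line contraction argument because the choice $\mathbf{T}=\mathbf{K}$ makes the affine controlled map into a pure $(1-c)$-scaling of the residual $\mathbf{f}(\mathbf{x})-\mathbf{K}$. The only point requiring a bit of care is justifying that \eqref{3} may be applied at every step, which is handled by the convexity-based invariance of $D$ noted above, and the mild bookkeeping separating $L=1$ from $L>1$ so that the claimed $c^{\ast}\in(0,1)$ is produced.
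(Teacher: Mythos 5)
Your proof is correct and follows essentially the same route as the paper: the identity $\mathbf{x}_{n+1}-\mathbf{K}=(1-c)(\mathbf{f}(\mathbf{x}_n)-\mathbf{K})$ combined with \eqref{3} gives the contraction factor $(1-c)L$, which is then iterated geometrically, with $c^*=1-1/L$ for $L>1$ exactly as in the paper. Your added remark on the convexity-based invariance of $D$, and your choice of a positive $c^*$ in the borderline case $L=1$ (where the paper sets $c^*=0$ despite claiming $c^*\in(0,1)$), are both harmless refinements.
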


If we select the zero target $\mathbf{T}=\mathbf{0}$ in  \eqref{2a}, 
the resulting control will be the proportional feedback (PF)
\begin{equation}
\label{PF_eq}
\mathbf{x}_{n+1}=\mathbf{f}((1-c) \mathbf{x}_n), \quad c \in [0,1),
\end{equation}
assuming the reduction of the state variable, which is proportional to the size of this variable \cite{gm}. 
Proportional reduction models constant effort harvesting or culling processes. 
Switching the variable transformation 
$\psi(\mathbf{x})=(1-c)\mathbf{x}$ with the map $\mathbf{f}$, we get a modified proportional feedback method (MPF)
\begin{equation}
\label{MPF} 
\mathbf{x}_{n+1}=(1-c) \mathbf{f}(\mathbf{x}_n), \quad c \in [0,1),
\end{equation} 
in which the control occurs after the process modeled by $\mathbf{f}$ takes place (e.g. reproduction). 
Applying Theorem~\ref{th1} to \eqref{MPF}, we obtain a result on the stabilization of the origin using MPF control.

\begin{corollary}
	\label{cor1}
	Assume that $D \subseteq \R^d$ is  convex, $\mathbf{0}\in D$ and that for a continuous function $\mathbf{f}\colon D \to D$,  
	the inequality $\| \mathbf{f(x)} \| \leq L \| \mathbf{x} \|$ holds for $\mathbf{x}\in D$.
	Then for $c \in (c^*,1)$ with $c^*= \min\{0,1-\frac{1}{L}\}$,  the origin is an attractor for any sequence starting 
	with $\mathbf{x}_0 \in D$ and satisfying the controlled equation \eqref{MPF}.
\end{corollary}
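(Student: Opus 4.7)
The plan is to recognize Corollary \ref{cor1} as a direct specialization of Theorem \ref{th1}, so the proof reduces to checking that the hypotheses of the theorem apply with the choices $\mathbf{K}=\mathbf{T}=\mathbf{0}$.

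First I would verify that $\mathbf{0}$ is a fixed point of $\mathbf{f}$: the hypothesis $\|\mathbf{f}(\mathbf{x})\|\le L\|\mathbf{x}\|$ evaluated at $\mathbf{x}=\mathbf{0}$ gives $\|\mathbf{f}(\mathbf{0})\|\le 0$, hence $\mathbf{f}(\mathbf{0})=\mathbf{0}$. This supplies the fixed-point equation \eqref{fixed} with $\mathbf{K}=\mathbf{0}$. At the same time the Lipschitz-type bound \eqref{3} around $\mathbf{K}=\mathbf{0}$ becomes exactly the assumed inequality $\|\mathbf{f}(\mathbf{x})-\mathbf{0}\|\le L\|\mathbf{x}-\mathbf{0}\|$, so condition \eqref{3} is automatic with the same constant $L$.

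Next I would identify the MPF equation \eqref{MPF} with the VMTOC scheme \eqref{2} for the special target $\mathbf{T}=\mathbf{0}$: indeed,
\[
c\,\mathbf{T}+(1-c)\mathbf{f}(\mathbf{x}_n)=(1-c)\mathbf{f}(\mathbf{x}_n),
\]
which is exactly the right-hand side of \eqref{MPF}. With $\mathbf{T}=\mathbf{K}=\mathbf{0}\in D$ and $D$ convex, all standing hypotheses of Theorem \ref{th1} hold.

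Applying Theorem \ref{th1} then yields the conclusion: if $L<1$, every $c\in[0,1)$ drives any trajectory starting in $D$ to $\mathbf{0}$, while if $L\ge 1$ the corresponding threshold from that theorem is $c^{\ast}=1-1/L$, which agrees with the stated formula $c^{\ast}=\max\{0,1-1/L\}$ once the two cases are combined (the expression in the statement of the corollary should be read as a maximum rather than a minimum). There is essentially no obstacle beyond this translation of hypotheses; the only thing worth double-checking is that the domain invariance needed to iterate \eqref{MPF} inside $D$ is not an extra assumption, since Theorem \ref{th1} already presupposes $\mathbf{f}\colon D\to D$ and convexity of $D$, so the convex combination $(1-c)\mathbf{f}(\mathbf{x}_n)=c\cdot\mathbf{0}+(1-c)\mathbf{f}(\mathbf{x}_n)$ remains in $D$ whenever $\mathbf{0}\in D$.
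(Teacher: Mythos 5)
Your proposal is correct and matches the paper's own (implicit) derivation exactly: the paper offers no separate proof, simply stating that the corollary follows by applying Theorem~\ref{th1} to \eqref{MPF}, i.e.\ with $\mathbf{K}=\mathbf{T}=\mathbf{0}$, which is precisely your reduction. Your observation that $c^{\ast}=\min\{0,1-\frac{1}{L}\}$ in the statement should read $\max\{0,1-\frac{1}{L}\}$ (as in Theorem~\ref{p_esta}) is a correct catch of a typo rather than a gap in your argument.
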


In Theorem \ref{th1} and Corollary \ref{cor1}, 
the constant $L$ in \eqref{3} is used to estimate the control intensity necessary to stabilize globally an equilibrium: 
the smaller $L$ is, the sooner is the global stability attained. 
Therefore, it is interesting to have easy ways to calculate $L$ for a given map. 
If $\mathbf{f}$ is globally Lipschitz continuous with $\mathbf{K}\in D$ being an equilibrium of $\mathbf{f}$, 
we can take $L$ as the global Lipschitz constant of $\mathbf{f}$, though this is not necessarily a minimal $L$. 
It is also possible to estimate $L$ if $\mathbf{f}$ is a locally Lipschitz continuous bounded function. 

\begin{lemma}
	\label{lemma4}
	Let $\mathbf{f} \colon D \to D$ be a locally Lipschitz continuous bounded function and $\mathbf{K} \in D$ be a fixed point of $\mathbf{f}$. 
	Then there exists $L\ge 1$ such that condition~\eqref{3} holds for any $\mathbf{x} \in D$.
\end{lemma}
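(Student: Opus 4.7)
The plan is to split the domain $D$ into a neighborhood of $\mathbf{K}$, where local Lipschitz continuity gives the bound directly, and its complement, where boundedness of $\mathbf{f}$ gives a crude linear bound because $\|\mathbf{x}-\mathbf{K}\|$ is bounded away from zero.

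First I would use the local Lipschitz property at the fixed point $\mathbf{K}$ to obtain constants $r>0$ and $L_1>0$ such that $\|\mathbf{f(x)}-\mathbf{f(K)}\|\le L_1\|\mathbf{x}-\mathbf{K}\|$ for every $\mathbf{x}\in D$ with $\|\mathbf{x}-\mathbf{K}\|\le r$. Using $\mathbf{f(K)}=\mathbf{K}$, this already gives condition \eqref{3} on that neighborhood with constant $L_1$.

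Next I would handle points at distance at least $r$ from $\mathbf{K}$. Since $\mathbf{f}$ is bounded, there exists $M>0$ with $\|\mathbf{f(x)}\|\le M$ for all $\mathbf{x}\in D$, so $\|\mathbf{f(x)}-\mathbf{K}\|\le M+\|\mathbf{K}\|=:M'$. For any such $\mathbf{x}$ we have
\[
\frac{\|\mathbf{f(x)}-\mathbf{K}\|}{\|\mathbf{x}-\mathbf{K}\|}\le \frac{M'}{r},
\]
so condition \eqref{3} holds on this portion of $D$ with constant $L_2:=M'/r$.

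Finally I would set $L=\max\{1,L_1,L_2\}$, which is $\ge 1$ as required and dominates both estimates, so \eqref{3} holds for every $\mathbf{x}\in D$. There is no real obstacle here; the only subtlety is remembering to enforce $L\ge 1$ by an extra max, since neither the local Lipschitz bound nor the boundedness estimate is guaranteed to produce a constant $\ge 1$ on its own.
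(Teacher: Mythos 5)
Your proof is correct and follows essentially the same route as the paper's: split $D$ into a ball around $\mathbf{K}$, where local Lipschitz continuity gives the bound, and its complement, where boundedness of $\mathbf{f}$ combined with $\|\mathbf{x}-\mathbf{K}\|$ being bounded below gives a crude linear bound, then take the maximum of the two constants. The only (cosmetic) difference is that the paper fixes the radius to be $\|\mathbf{K}\|$ while you use a generic $r>0$, which is in fact slightly safer since it avoids the degenerate case $\mathbf{K}=\mathbf{0}$.
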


So far we have justified the possibility to
stabilize only  a fixed point of the original map with VMTOC. 
Let us aim to stabilize an arbitrary interior point in $D$ with VMTOC.

\begin{theorem}
	\label{th2}
	Suppose $\mathbf{f}\colon D \to D$ is a continuous function which is either globally Lipschitz, or locally Lipschitz and bounded, 
	and $D$ is convex.
	Then for any $\mathbf{K}\in D$, $\mathbf{K} \not\in \partial D$ 
there exists a VMTOC for which 
	$\mathbf{K}$ is a globally asymptotically stable equilibrium.
\end{theorem}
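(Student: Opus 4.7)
The plan is to bootstrap Theorem \ref{th1} via the constructions in Lemmas \ref{lemma2} and \ref{lemma1}. I would first use Lemma \ref{lemma2} to produce a pair $(c_\mathbf{K},\mathbf{T_K})$ for which the intermediate map
\[
\mathbf{g}(\mathbf{x})=c_\mathbf{K}\mathbf{T_K}+(1-c_\mathbf{K})\mathbf{f}(\mathbf{x})
\]
has $\mathbf{K}$ as a fixed point. The idea is that although $\mathbf{K}$ need not be a fixed point of the original $\mathbf{f}$, we can manufacture it as a fixed point of a first VMTOC applied to $\mathbf{f}$, and then stabilize it by applying a second VMTOC with target $\mathbf{K}$ to $\mathbf{g}$.

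Next I would check that $\mathbf{g}$ still falls under the hypotheses of Theorem \ref{th1} with fixed point $\mathbf{K}$. From $\mathbf{K}=c_\mathbf{K}\mathbf{T_K}+(1-c_\mathbf{K})\mathbf{f}(\mathbf{K})$ I get
\[
\mathbf{g}(\mathbf{x})-\mathbf{K}=(1-c_\mathbf{K})\bigl(\mathbf{f}(\mathbf{x})-\mathbf{f}(\mathbf{K})\bigr),
\]
so in the globally Lipschitz case $\mathbf{g}$ is globally Lipschitz with constant $(1-c_\mathbf{K})L_f$, and condition \eqref{3} holds around $\mathbf{K}$ with that constant. In the locally Lipschitz and bounded case, $\mathbf{g}$ inherits both properties from $\mathbf{f}$ (being an affine combination of $\mathbf{f}$ with a constant vector, with bounded image contained in $D$), so Lemma \ref{lemma4} applies to $\mathbf{g}$ at its fixed point $\mathbf{K}$ and yields some constant $L\ge 1$ satisfying \eqref{3} for $\mathbf{g}$ on $D$.

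Now I would apply Theorem \ref{th1} to $\mathbf{g}$ with target $\mathbf{T}=\mathbf{K}$: there exists $c^{\ast}\in[0,1)$ such that for every $c\in(c^{\ast},1)$ all solutions of
\[
\mathbf{x}_{n+1}=c\mathbf{K}+(1-c)\mathbf{g}(\mathbf{x}_n)
\]
starting in $D$ converge to $\mathbf{K}$. Finally I would invoke Lemma \ref{lemma1}, which guarantees that the composition of these two VMTOCs (first the one producing $\mathbf{g}$, then the stabilizing one with target $\mathbf{K}$) is itself a single VMTOC of the form \eqref{2}, with some explicitly computable pair $(\tilde c,\tilde{\mathbf{T}})$. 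By construction $\mathbf{K}$ is the globally asymptotically stable equilibrium of this combined VMTOC, which proves the theorem.

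The main subtle point, and likely the only nonroutine step, is ensuring that the intermediate $\mathbf{g}$ inherits a Lipschitz-type estimate of the form \eqref{3} at the point $\mathbf{K}$ that was just engineered to be a fixed point; this is what forces the dichotomy in the hypotheses between globally Lipschitz and locally Lipschitz plus bounded, and it is precisely why Lemma \ref{lemma4} was established beforehand. Once that is in place, the proof is a clean concatenation of the previously proved results.
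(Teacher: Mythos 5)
Your proposal is correct and follows essentially the same route as the paper: Lemma~\ref{lemma2} to manufacture $\mathbf{K}$ as a fixed point of an intermediate VMTOC $\mathbf{g}$, a Lipschitz estimate of type \eqref{3} for $\mathbf{g}$ (via Lemma~\ref{lemma4} or the global Lipschitz constant), Theorem~\ref{th1} applied to $\mathbf{g}$ with target $\mathbf{K}$, and Lemma~\ref{lemma1} to collapse the two controls into one. Your variant of applying Lemma~\ref{lemma4} directly to $\mathbf{g}$ (whose fixed point really is $\mathbf{K}$) is in fact slightly cleaner than the paper's application of it to $\mathbf{f}$.
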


\section{Applications}
\label{Applic}

\subsection{LPA model}
Consider the following age structured model designated to  describe the changes in the densities of the flour beetle \textit{Tribolium castaneum} life stages
\begin{equation}
\label{LPAmodel}
\left \{ \begin{array}{l}
L_{n+1} =b A_{n} \exp(-c_{el} L_{n} -c_{ea} A_n),\\
P_{n+1}=(1-\mu_{L}) L_{n},\\
A_{n+1}= P_n \exp(-c_{pa} A_{n}) + A_{n}(1-\mu_A),
\end{array}
\right .
\end{equation}
where $L_n$ is the number of feeding larvae, $P_n$ is the number of nonfeeding larvae, pupae and callow adults, 
and $A_n$ is the number of sexually mature adults at stage $n$, whereas  $b>0$ is the number of larval recruits per adult per unit of time in the absence of cannibalism, $c_{el}, c_{ea},c_{pa}\in \mathbb R_+$ account for the cannibalism of eggs by both larvae and adults and the cannibalism of pupae by adults,  and 
$\mu_{L}, \mu_A \in (0,1)$ are the larval and adult rates of mortality. 
We refer to \cite{desharnais2001chaos} for a more detailed explanation of the model. 

System \eqref{LPAmodel} is known as the LPA (larvae-pupae-adults) model. 
Depending on the values of the parameters, the dynamics predicted by the LPA model can be different (e.g. stable 
equilibria, periodic cycles, chaotic oscillations) and, very interestingly, 
these different types of dynamics have been demonstrated with flour beetle populations in the laboratory.

Here, we set  $b=10.45$, $c_{el}=0.01731$,  $c_{ea}=0.01310$, $c_{pa}=0.35$, $\mu_{L}=0.200$, and $\mu_A=0.96$. 
For these parameter values the LPA model exhibits chaotic oscillations and there exists a strange attractor \cite{desharnais2001chaos}. 
Figure \ref{fig:3D} shows this chaotic attractor and the oscillations for all age stages.

\begin{figure}[htp]
	\centering
	\includegraphics[width=0.45\linewidth]{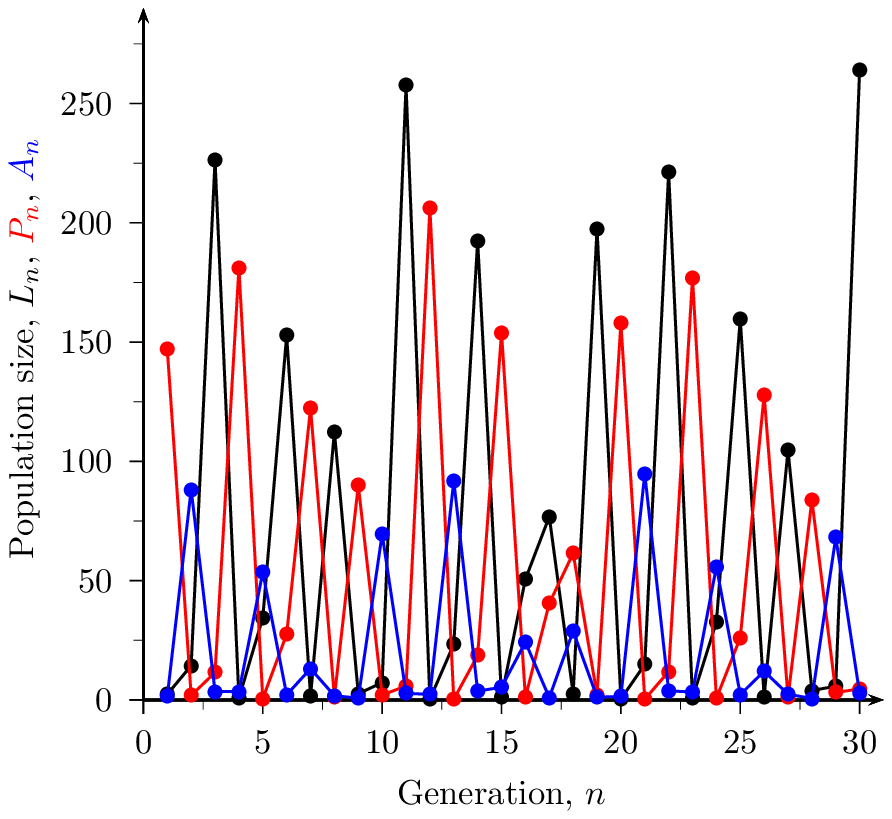}
	\quad 
	\includegraphics[width=0.5\linewidth]{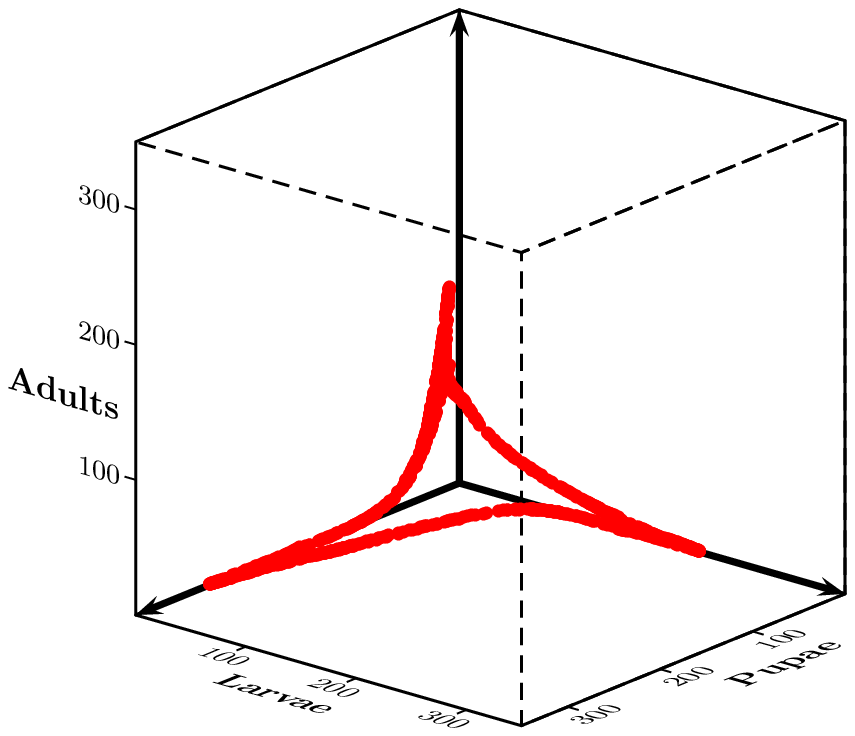} 
	\caption{
	Illustration of the chaotic dynamics of the LPA model  \eqref{LPAmodel} with the parameters given in the main text. 
	\textbf{Left:} chaotic oscillations for all age stages, we plotted 30 consecutive generations of the three age stages $L_n$ (black), 
	$P_n$ (red) and $A_n$ (blue) after discarding the first 3000 values. 
	\textbf{Right:} asymptotic strange attractor, we plotted 1000 consecutive 3-D points  $(L_n, P_n,A_n)$  after discarding the first 3000 values. Initial conditions were chosen pseudo-randomly.
 }
	\label{fig:3D}
\end{figure}

In order to stabilize this chaotic LPA model, Desharnais {\em et al} proposed the in-box control method \cite{desharnais2001chaos}. 
This method has two steps: 
first, detecting the region of the attractor more sensible to small perturbations by computing the \emph{local} Lyapunov exponents 
\cite{bailey1996local}, and, second, modifying the population (by adding a fixed number of individuals) when the population is inside that region. 
The in-box method was able to stabilize chaotic populations of the flour beetle in the laboratory, 
and the experimental data obtained was predicted correctly by numerical simulations. 
The in-box method is an empirically tested 
control of chaos strategy in age-structured population dynamics, see also \cite{fryxell2005evaluation,sah2013stabilizing,tung2016simultaneous}.   
However, the in-box method is not easy to apply. 
A controller needs to detect the region of the attractor more sensible to small perturbations and then to 
establish an action (remove or add certain type of individuals) that sends the population out of that region. 
Here, we show that, at least theoretically, VMTOC can be used to stabilize age-structured populations in a simpler way.

LPA model \eqref{LPAmodel} can be written as $\mathbf{x}_{n+1}=\mathbf{f}(\mathbf{x}_n)$ with  $\mathbf{x}=(x_1,x_2,x_3)$  
\[
\mathbf{f}(\mathbf{x})=(b x_3 \exp(-c_{el} x_1 -c_{ea} x_3), (1-\mu_{L}) x_1, x_2 \exp(-c_{pa} x_3) + x_3(1-\mu_A)).
\] 
Next, note that $f_1(\mathbf{x})\le\mbox{e}^{-1} b/ c_{ea}\approx 295,44$,  $f_2(\mathbf x)\le 0.8 
x_1$, and $f_3(\mathbf{x})\le \max\{x_2,x_3\}$ for each $\mathbf{x} \in \R^3_+$. 
Fixing the $\max$-norm $\| \mathbf{x} \| =\max_{1\leq j\leq n} |x_j|$, 
we have that  $\mathbf{\|f(x)\| \le \| x\|}$ for any $\mathbf{x}$ with $\|\mathbf{x}\|\ge \mbox{e}^{-1} b/ c_{ea}$. 
Lemma \ref{lemma_exist} guarantees that applying VMTOC to the LPA model, 
independently of the target $\mathbf{T} \in \mathbb R^3_+\setminus\{\mathbf{0}\}$ and the control intensity $c\in (0,1)$, 
there exists at least one equilibrium in the open domain 
\[
\left \{
\mathbf{x}\in \mathbb R^d_+ :  0<\|\mathbf x\|<\max\{\mbox{e}^{-1} b/ c_{ea},\|\mathbf{T}\|\} 
\right \}.
\]

Numerically, one finds that $\mathbf{K}\approx (28.0120,22.4096,4.6251)$ is a fixed point of the LPA model. Applying VMTOC with target  
$\mathbf{T}=(28.0120,22.4096,4.6251)$ to the LPA model \ref{LPAmodel} has the effect illustrated in Figure \ref{fig:LPAbifK}. 

\begin{figure}[h]
	\centering
	\includegraphics[width=0.8\linewidth]{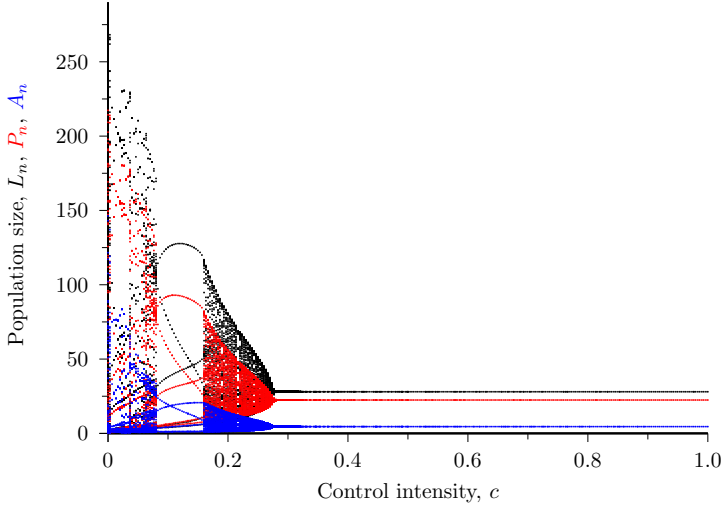}
	\caption{Stabilizing effect of VMTOC on the uncontrolled LPA model \eqref{LPAmodel}  with the parameters given in the main text. 
Target was chosen as $\mathbf{T}=(28.0120,22.4096,4.6251)$, which is an approximation of an equilibrium of the uncontrolled system. For each $c\in k/300$, $k=1,\dots,300$, we plotted 50 consecutive values of the three age stages $L_n$ (black), $P_n$ (red) and $A_n$ (blue) 
after discarding the first 3000 values. Initial conditions were chosen pseudo-randomly.}
	\label{fig:LPAbifK}
\end{figure}

This agrees with Theorem~\ref{p_esta} and Remark \ref{remark_loc_stability}. Indeed, the Jacobian of $\mathbf{f}$ is
\[
J\mathbf{f}(\mathbf{x})=
\begin{pmatrix}
-b\,c_{el}\,x_3\,{\mbox e}^{-c_{ea}\,x_3-c_{el}\,x_1} & 0 & b\,(1-c_{ea}\,x_3){\mbox e}^{-c_{ea}\,x_3-c_{el}\,x_1} \cr 
1-\mu_L & 0 & 0\cr 
0 & {\mbox e}^{-c_{pa}\,x_3} & 1-\mu_A-c_{pa}\,x_2\,{\mbox e}^{-c_{pa}\,x_3}
\end{pmatrix},
\]
and calculating numerically the value of the spectral radius of $J\mathbf{f}(\mathbf{T})$, 
we obtain $\rho(J\mathbf{f}(\mathbf{T}))\approx  1.3803$.    
Therefore, using Theorem~\ref{p_esta} and Remark \ref{remark_loc_stability}, we can guarantee that $\mathbf{K}$ is asymptotically stable for MVTOC 
if $c$ is greater than $c^*=1-\frac{1}{1.3803}\approx 0.2756$. 

For the scalar case, in \cite{TPC} 
the effect of changing the size of the target on TOC and MTOC  was studied.  
There, the focus was mainly on the size of the equilibrium, showing, for example, that this size increases as the size of the target 
increases. 
Here, we present an effect of changing the target which has not been previously reported.  
To this end, we select three different targets $\mathbf{T}_1=(30,30,200)$, $\mathbf{T}_2=(30,200,30)$, and $\mathbf{T}_3=(200,30,30)$ and use the same Euclidean norm. 
Note that choosing one of them means that the controller designs one of the age stages to be prevalent in comparison to the other two.  

Our results show that independently of the target, VMTOC will stabilize an equilibrium if the control intensity is large enough. 
However, the responses are different, not only from the point of view of the sizes of the different age stages at the equilibrium (which 
one should expect), but also the effect of increasing $c$ on the stability of the equilibrium. 
For targets $\mathbf{T}_2$ and $\mathbf{T}_3$ (see Figure \ref{fig:LPAbif2003030}), increasing $c$ 
has no negative effect on the stability: if for $c=\hat{c}$ the controlled system has a stable equilibrium, 
then it has a stable equilibrium for any $c \in (\hat{c},1)$.  
For the other target, $\mathbf{T}_1$, this is not true: 
for a certain $c=\hat{c}$, the controlled system has a stable equilibrium for $c\in (\hat{c},c_1)$, while
for $c\in (c_1, c_2)$, the adult population is still stable but both pupae and larvae have a stable two-cycle,
which has the form of bubbling, see Figure~\ref{fig:LPAbif3030200}. Increasing further the control
intensity further removes these oscillations.  
Figure~\ref{fig:LPAbif3030200} illustrates it, for $c=0.1$ the three age stages tend to an equilibrium value, 
but increasing the control intensity 
to any $c$ in approximately $(0.17,0.36)$ causes oscillations in larvae and pupae populations; if $c>0.36$, 
the three age stages tend again to their equilibrium values.

\begin{figure}[htp]
	\centering
	\includegraphics[width=0.7\linewidth]{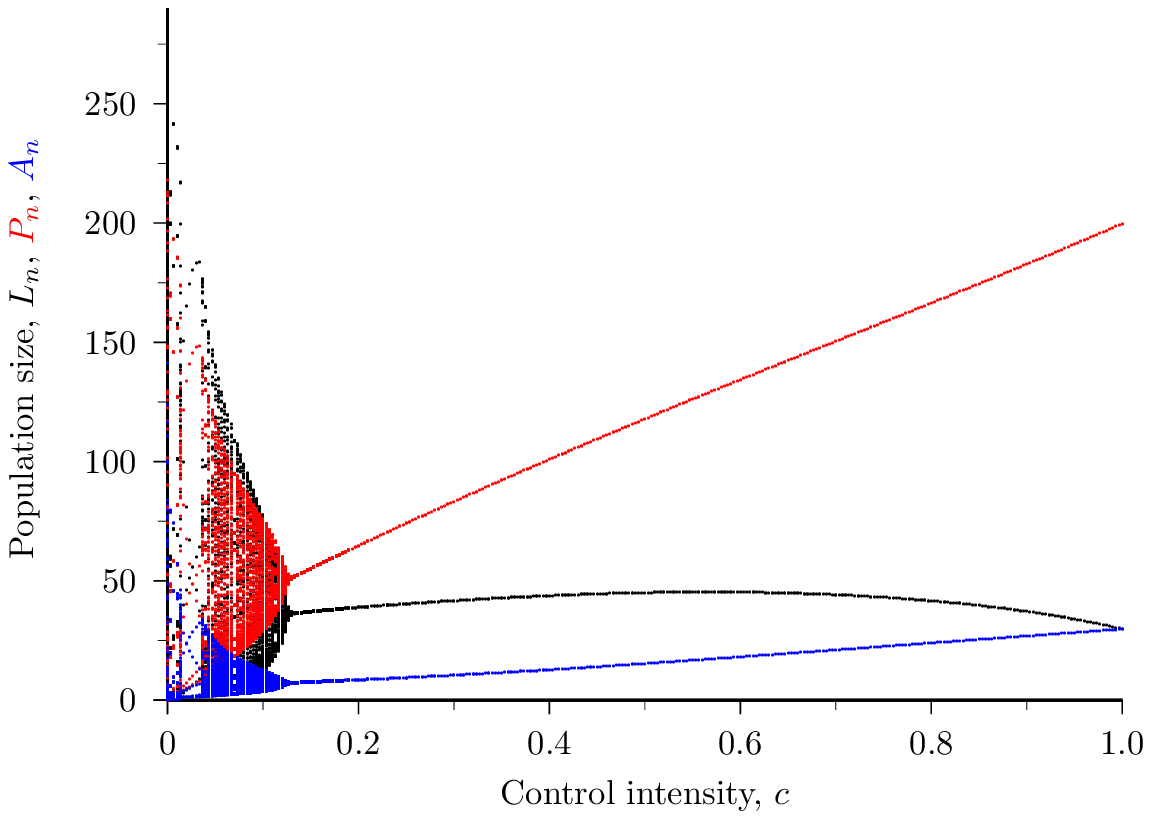}
	
	\includegraphics[width=0.7\linewidth]{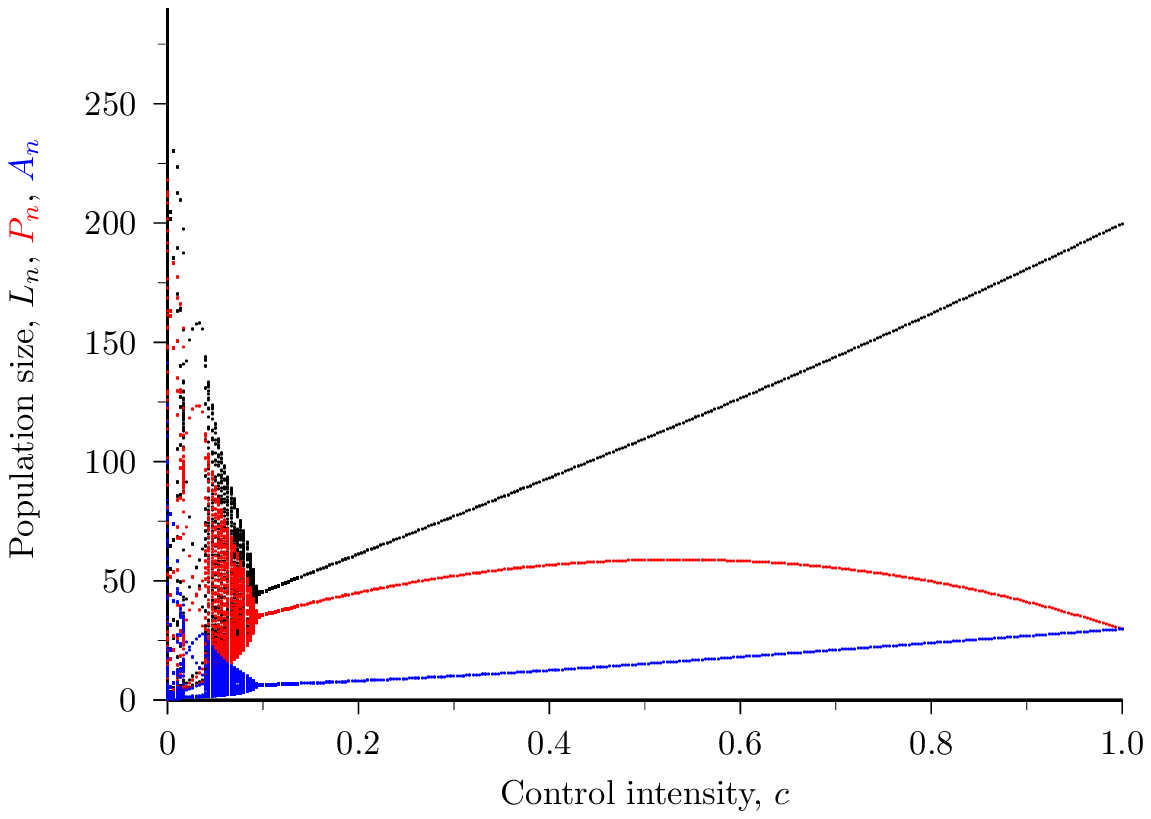}
	\caption{VMTOC applied to \eqref{LPAmodel}  with the parameters given in the main text.  
         For each $c\in k/300$, $k=1,\dots,300$, we plotted 50 consecutive values of the three age-stages $L_n$ (black), $P_n$ (red) and $A_n$ (blue) after discarding 
the first 3000 values. Initial conditions were chosen pseudo-randomly. \textbf{Top:} the target is $\mathbf{T_2}=(30,200,30)$. \textbf{Bottom:} 
the target is $\mathbf{T_3}=(200,30,30)$. }
	\label{fig:LPAbif2003030}
\end{figure}

\begin{figure}[htp]
	\centering
	\includegraphics[width=0.8\linewidth]{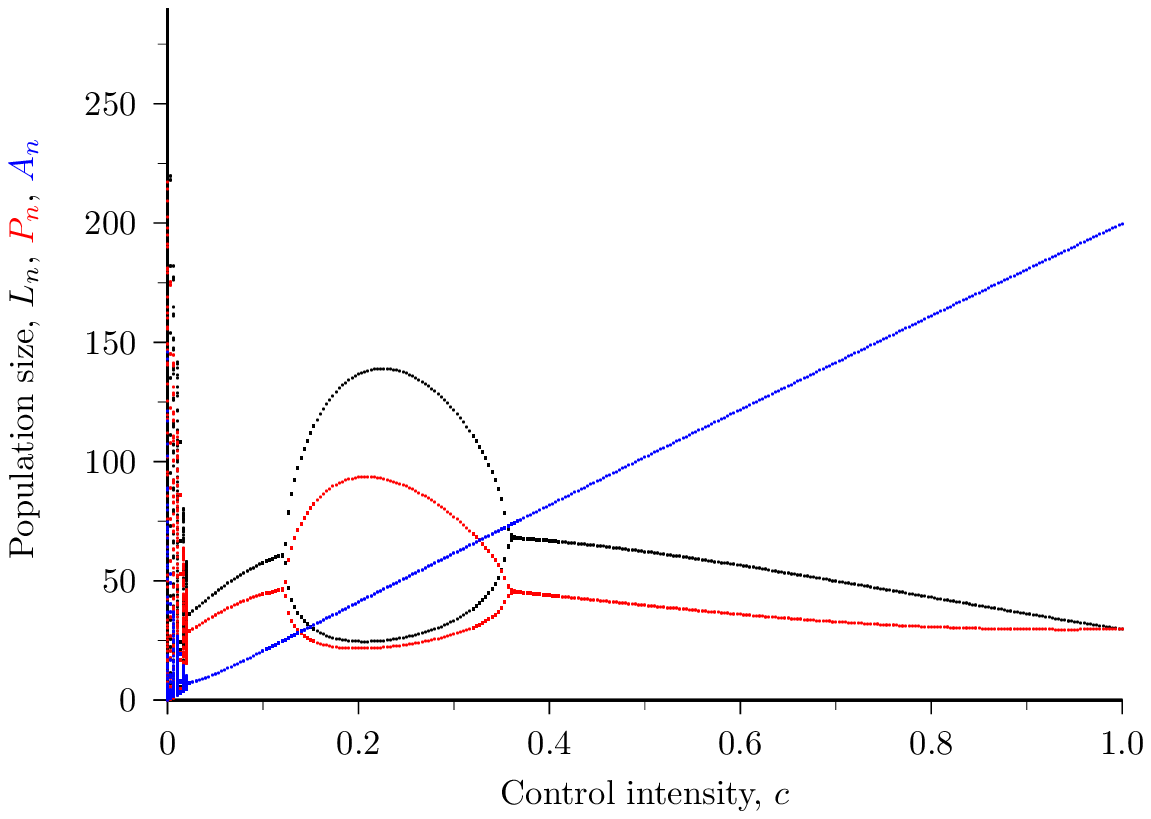}
	\caption{Bubbles in two age-classes show that increasing 
		$c$ has not always a stabilizing effect for VMTOC applied to \eqref{LPAmodel}  with the parameters given in the main text. 
    The target is $\mathbf{T_1}=(30,30,200)$. For each $c\in k/300$, $k=1,\dots,300$, we plotted 50 consecutive values of the three age-stages $L_n$, 
$P_n$ and $A_n$ after discarding the first 3000 values. Initial conditions were chosen pseudo-randomly.}
	\label{fig:LPAbif3030200}
\end{figure}

	\subsection{Higher order equations revisited}
	\label{subsec:higher}

In \cite{braverman2015stabilization}, 
we showed that using a fixed target is possible to stabilize an equilibrium of a chaotic higher order difference equation. 
Higher order difference equations arise, for instance, when studying multi-seasonal interactions in a population with  non-overlapping generations \cite{levin1976note}.
	
	Although increasing the control intensity in the method discussed in \cite{braverman2015stabilization} stabilizes an equilibrium, 
it does not follow the characteristic route from chaos of folding-period bifurcations that many stabilizing strategies show. Indeed, TOC itself presents this route for one-dimensional models---see the bifurcation diagram 
        of Figure 1 in \cite{Dattani} and compare with the bifurcation diagrams in Figures 1, 2 and 3 in \cite{braverman2015stabilization} for 
        higher-dimension models. 
Those bifurcation diagrams indicate that the stabilization of an equilibrium in higher order equations happens through a Neimark-Sacker bifurcation while in the one-dimensional case happens through a period-folding bifurcation. 
Thus, if the aim is stabilization of periodic orbits of higher order equations, direct application of TOC presented in 
\cite{braverman2015stabilization}  is not useful.

Let us illustrate with a numerical example that applying a periodic target, i.e. using VMTOC with a target $\mathbf T$ with non-equal 
components, can stabilize a periodic orbit for higher-order equations. 
We consider the delay  Ricker equation in the form of \cite{peran2015global}
        \begin{equation}
	\label{Rickerdelayed}
	u_{n+1}=u_n \exp(r-u_{n-d+1}),
	\end{equation}
	where $d\ge 2$ is a fixed natural number determining the time lag in the intraspecific regulatory mechanisms of the population. Equation \eqref{Rickerdelayed} can be rewritten as the system  
	\begin{equation}
	\label{RickerdelayedSystem}
	\mathbf{x}_{n+1}=\mathbf{f}(\mathbf{x}_n)
	\end{equation}
	with $\mathbf{x}=(x_1,x_2,\dots,x_k)$ and $\mathbf{f}(\mathbf{x})= (x_1 \exp(r-x_{k}),x_1 ,\dots,x_{k-1})$.

Let us fix $r=2$, $d=2$ and choose the target $\textbf{T}=(1,3)$. 
Figure \ref{fig:PlotUncontrolControl}B shows the effect of applying VMTOC to system \eqref{RickerdelayedSystem}. 
There, the population evolves without any control during the first 20 generations, then VMTOC with control intensity $c=0.4$ is applied. 
We can observe how a period-two orbit is stabilized. Figure \ref{fig:PlotUncontrolControl}A corresponds to 
the population dynamics without control, 
showing that the uncontrolled population has phases of low population density followed by phases with higher-density.

	\begin{figure}[htp]
				\centering
				\includegraphics[width=0.9\linewidth]{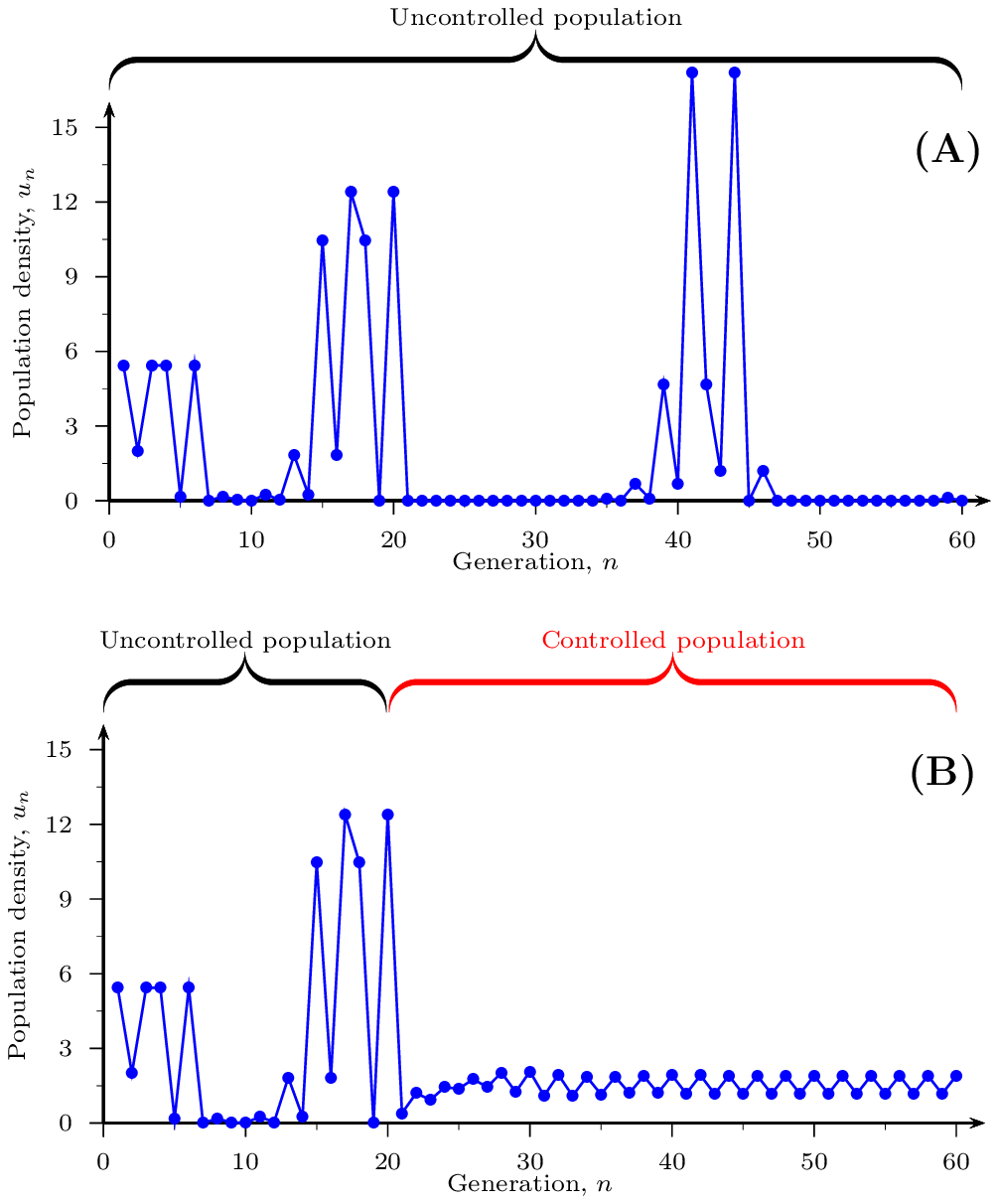}
				\caption{
			Delay Ricker equation \eqref{Rickerdelayed} uncontrolled and controlled using a periodic target $\textbf{T}=(1,3)$. 
(A) Uncontrolled population showing phases of low population density followed by phases with higher-density. 
(B) Population controlled after 20 generations.
During the first 20 generations no control is applied. 
Starting with the 20th, VMTOC is applied to  \eqref{RickerdelayedSystem} with $\textbf{T}=(1,3)$.}
				\label{fig:PlotUncontrolControl}
			\end{figure}

\section{Discussion}

Compared to other (especially one-parameter) methods,
target oriented control (TOC) has the advantage that it allows independent choice of both the stabilized state and
the control intensity. For instance, TOC can stabilize a point which 
is not an equilibrium of the original map. 
The vector modifications VTOC and VMTOC inherit these two properties of scalar TOC: a variety of originally unstable states 
can be stabilized. Certainly the minimal control sufficient for stabilization would depend on the choice of the target $\mathbf{T}$
and the state to be stabilized. However, as we can observe, the ability to stabilize an arbitrary state can be quite cost-involved.
The cost efficiency of TOC is discussed in \cite{Dattani} and analyzed numerically. 
If we describe the eventual cost-per-step $P$ as the ``average" control perturbation 
\begin{eqnarray}
P & = & \limsup_{n,k \to\infty} \frac{1}{k} \sum_{j=n}^{n+k-1} \| c\mathbf{T}+(1-c)\mathbf{f}(\mathbf{x}_n) - \mathbf{f}(\mathbf{x}_n) \|
\nonumber \\
& = & c \limsup_{n,k \to\infty} \frac{1}{k} \sum_{j=n}^{n+k-1} \| \mathbf{f}(\mathbf{x}_n) - \mathbf{T} \|
\label{cost}
\end{eqnarray}
then, once a state $\mathbf{x}^{\ast}$ is stabilized, the eventual cost-per-step becomes
$$
P=  c \| \mathbf{f}(\mathbf{x}^{\ast}) - \mathbf{T} \|.
$$
If the state to be stabilized is a fixed point $\mathbf{K}=\mathbf{f}(\mathbf{K})$ 
of the original vector map, the cost-per-step may be quite high 
in the transient period but eventually tends to zero.
From the above, the closer we can choose the target to the image of the stabilized state, more efficiency can be achieved.
However, the minimal stabilizing $c$ also depends on the choice of $\mathbf{x}^{\ast}$ and $\mathbf{T}$. 
The cost optimization of VMTOC is a separate question, and its solution is not in the framework of the current paper.
Also, simplified estimate \eqref{cost} does not take into account the following factors:
\begin{itemize}
\item
the costs of culling and restocking can be different;
\item
the costs may strongly depend on either age stage or patch location. 
\end{itemize}

Moreover, culling and restocking of certain age-stages or locations can be problematic.
This brings us to the discussion of some further VMTOC generalization.
Originally, TOC included two parameters: the target $T$ and the control intensity $c$.
We considered VMTOC in $\R^d$ with $d+1$ parameters involved. 
This allowed us to obtain results on the possibility of stabilization for a variety of states,
and to estimate the minimal control intensity $c$ in each case. 
However, it is quite natural also to consider a $2d$-parameter method with a diagonal control matrix $C$, where 
the controls $c_{j}=c_{jj}$ are applied to each $j$-th stage. 
With this modification, VMTOC has  the form
\begin{equation}
\label{2_modified}
\mathbf{x}_{n+1}=C\mathbf{T}+(I_d-C)\mathbf{f}(\mathbf{x}_n), \quad \mathbf{T} \in \R^d, \; C=diag\{c_{j} \}, \, c_j\in [0,1),
\end{equation}
where $I_d$ is the $d\times d$ identity matrix and $j=1,2,\dots,d$.
If some stages cannot be controlled, the corresponding $c_j$ will be zero.

A higher order difference equation is a particular case of the one-dimensional vector equation, and VMTOC allows to stabilize 
periodic orbits of higher order equations, see the example of stabilization for a two-orbit of the delayed Ricker model above.
Let us note that the same technique applies to stabilizing a periodic $k$-orbit of a vector map.
Indeed, denote by $\mathbf{f}^k$ 
the $k$-th iterate of $\mathbf{f}$.
Then stabilizing a periodic $k$-orbit of $\mathbf{f}$ is equivalent to stabilizing a certain state of $\mathbf{f}^k$.
All the results of the present paper apply to this case. 
We can also consider a control type similar to \eqref{2_modified}, with either $c_j=c$ or zeros on the main diagonal of 
$C$. In the case of $c_1= \dots=c_d=c$, all the other $c_j$ for stabilization of $\mathbf{f}^k$ being zeros, the result is the pulse 
control \cite{Chaos2014}. If all $c_j=0$, except $c_k, c_{k+d}, \dots$, this corresponds to a control of only one age stage or patch. 
While it may definitely be problematic to achieve stabilization goals with this limited type of control, it is still an interesting 
question whether controlling one stage only (for example, juveniles), we can reduce the risk of extinction and population fluctuations.


\appendix

\section{Appendix}

The next result shows that VTOC and MVTOC are topologically conjugate, thus they have the same dynamics. 
We recall that two maps $\phi$ and $\psi$ are topologically conjugate if  
there is a homeomorphism $h$ such that $\phi \circ h= h  \circ \psi$.

\begin{lemma} 
\label{lemma3} 
Assume $\mathbf f \colon D \to D$, with $D$ convex and $\mathbf{T}\in D$. 
Then the difference equations \eqref{2a} and \eqref{2} are topologically conjugate.
\end{lemma}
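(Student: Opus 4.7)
\medskip

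\noindent\textbf{Proof proposal for Lemma \ref{lemma3}.}
The natural candidate for the conjugating homeomorphism is the affine map
\[
h(\mathbf{x}) = c\mathbf{T} + (1-c)\mathbf{x},
\]
already singled out as $\phi$ in \eqref{phidef} in the scalar case. Because $c\in[0,1)$, we have $1-c\neq 0$, so $h\colon\R^d\to\R^d$ is an affine bijection with continuous inverse $h^{-1}(\mathbf{y})=(\mathbf{y}-c\mathbf{T})/(1-c)$; in particular $h$ is a homeomorphism. Since $D$ is convex and $\mathbf{T}\in D$, the point $h(\mathbf{x})=c\mathbf{T}+(1-c)\mathbf{x}$ is a convex combination of $\mathbf{T}$ and $\mathbf{x}$, so $h(D)\subseteq D$, and $h$ restricts to a homeomorphism of $D$ onto its (forward-invariant) image $h(D)$.

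Denote the VTOC and VMTOC maps by $F(\mathbf{x})=\mathbf{f}(c\mathbf{T}+(1-c)\mathbf{x})$ and $G(\mathbf{x})=c\mathbf{T}+(1-c)\mathbf{f}(\mathbf{x})$, respectively. The conjugacy identity $h\circ F = G\circ h$ is now a one-line calculation: for every $\mathbf{x}\in D$,
\[
h(F(\mathbf{x})) = c\mathbf{T} + (1-c)\mathbf{f}\bigl(c\mathbf{T}+(1-c)\mathbf{x}\bigr) = c\mathbf{T} + (1-c)\mathbf{f}(h(\mathbf{x})) = G(h(\mathbf{x})).
\]
Equivalently, $F = h^{-1}\circ G\circ h$, so the orbits of $F$ and $G$ are in bijection via $h$, and any topological/dynamical invariant (fixed points, periodic orbits, stability type, $\omega$-limit sets) transfers between the two systems.

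The only delicate point is the domain: strictly speaking $h$ is a homeomorphism from $D$ onto $h(D)$, which may be a proper subset of $D$. This is, however, harmless for the intended use. On the one hand, $h(D)$ is forward invariant for $G$ (since $G\circ h = h\circ F$), so the conjugacy identifies $(F,D)$ with $(G,h(D))$ as topological dynamical systems. On the other hand, every orbit of $G$ starting in $D$ enters $h(D)$ after one step, because $G(\mathbf{x})=c\mathbf{T}+(1-c)\mathbf{f}(\mathbf{x})=h(\mathbf{f}(\mathbf{x}))\in h(D)$; hence the long-time behavior of $G$ on $D$ coincides with its behavior on $h(D)$, and in particular the local/global stability assertions needed in Sections \ref{Sec_calculus}--\ref{Sec_stability} transfer verbatim. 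I expect this bookkeeping about $h(D)\subseteq D$ to be the only mildly subtle step; the algebraic verification of $h\circ F=G\circ h$ itself is immediate.
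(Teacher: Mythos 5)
Your proof is correct and uses exactly the same conjugating map as the paper (the paper calls it $\varphi$ rather than $h$), verifying the identical identity $\varphi\circ(\mathbf{f}\circ\varphi)=(\varphi\circ\mathbf{f})\circ\varphi$. Your treatment of the domain issue ($h(D)$ possibly a proper subset of $D$, with orbits of VMTOC entering $h(D)$ after one step) is in fact slightly more explicit than the paper's.
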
  
\begin{proof} 
We are going to show that the maps defining the difference equations \eqref{2a} and \eqref{2} are topologically conjugate. We begin by 
defining such maps.    
	
	Consider the map $\varphi(\mathbf{x})=c\mathbf{T}+(1-c)\mathbf{x}$  from $D$ to $\varphi(D)$. Moreover, since $D$ is convex $ \varphi(D) \subset D$ and the map $\mathbf{f}\circ \varphi\colon D \to D$ is well defined. Clearly,  map $\mathbf{f}\circ \varphi$ defines the recurrent relation in equation \eqref{2a}. On the other hand, note that after the first iterate the solutions of \eqref{2} belong to $\varphi(D)$. Therefore, we have that after the first iterate the map that defines the recurrence given by  \eqref{2} is  $\varphi \circ \mathbf{f} \colon \varphi(D) \to \varphi(D)$.  
	
	It is easy to check that  $\varphi$ is a homeomorphism from $D$ to $ \varphi(D)$ and obviously $(\varphi \circ \mathbf{f})\circ\varphi=\varphi \circ ( \mathbf{f}\circ \varphi)$.  Hence, $\mathbf{f}\circ \varphi$ and $\varphi \circ \mathbf{f}$ are topologically conjugate.	    
\end{proof}


\begin{proof} {\em of Lemma 1}
An equilibrium $\mathbf{x}^{\ast}$ of VMTOC is a fixed point of the map
\begin{equation}
\label{eq_fp}
\mathbf{g(x)}= c \mathbf{T} + (1-c) \mathbf{f(x)}.
\end{equation}
Since $\mathbf{T}\in \mathbb{R}^d_{+}\setminus \{\mathbf{0} \}$ and $\mathbf{f}\colon \mathbb{ R}^d_+\to \mathbb{ R}^d_+$,
we have for every $c \in (0, 1)$ that  $c \mathbf{T} + (1-c) \mathbf{f(0)}\in \mathbb{R}^d_{+}\setminus \{\mathbf{0} \}$ and
\[\|\mathbf{g(0)}\| = \| c \mathbf{T} + (1-c) \mathbf{f(0)}\| > \|\mathbf{0}\|.
\]
Hence, by the continuity of $\mathbf{g}$ and the norm, 
for each fixed $c \in (0, 1)$ is possible to find $0<m<M$ 
such that 
\[
\|\mathbf{g(x)} \|  \ge  \|\mathbf{x}\|, \quad \mathbf{x}\in \mathbb R^d_{+} , \|\mathbf{x}\|=m.
\]
On the other hand, we have for any $\mathbf{x}\in \mathbb R^d_{+}$ with $\|\mathbf{x}\|\ge \max\{M,\|\mathbf{T}\|\}$, 
\begin{align*}
\|\mathbf{g}(\mathbf{x})\| & = \|c \mathbf{T}   +   (1-c) \mathbf{f}(\mathbf{x})\| \le \|c \mathbf{T} \|  
+ \|  (1-c) \mathbf{f}(\mathbf{x})\| 
\\& 
= c\| \mathbf{T} \|  + (1-c)\|   \mathbf{f}(\mathbf{x})\|
\le c\| \mathbf{x} \|  + (1-c)\| \mathbf{x}\|= \| \mathbf{x}\|.
\end{align*}

Thus, by Krassnosel'ski{\u \i} Fixed-point Theorem for cone-compressing operators (see e.g. \cite[Theorem 7.12]{granas2013fixed}), 
the map $\mathbf{g}$ has at least one fixed point ${\mathbf x}^{\ast}$ in the set 
\[
\left \{
\mathbf{x}\in \mathbb R^d_+ :  0<\|\mathbf x\|<\max\{M,\|\mathbf{T}\|\} \right \}
\] for every $c \in (0, 1)$. 

Finally, the last statement of the lemma follows from noticing that an equilibrium $\mathbf{x}^*$ of VMTOC satisfies 
	$$
	\mathbf{x}^*=c \mathbf{T} + (1-c) \mathbf{f}(\mathbf{x}^*).
	$$ 
\qed
\end{proof}

\begin{proof} {\em of Lemma 2}
	If $\mathbf{K}=\mathbf{f}(\mathbf{K})$, we take $\mathbf{T}_\mathbf{K}=\mathbf{K}$ and any $c_\mathbf{K}\in (0,1)$. 
        Let $\mathbf{K} \neq \mathbf{f}(\mathbf{K})$.
	Consider the set of points 
	\[
	R = \{\mathbf{f}(\mathbf{K})+ \alpha (\mathbf{K}-\mathbf{f}(\mathbf{K})): \alpha>1\},
	\]
	which is the ray in the direction $\mathbf{K}-\mathbf{f}(\mathbf{K})$ starting at $\mathbf{K}$. Using that $\mathbf{K} \not\in \partial D$, we have that $R\cap D\neq \emptyset$. Therefore,   
	there exists $\alpha>1$  such that $\mathbf{T_K}:=\alpha \mathbf{K}+(1-\alpha)\mathbf{f(K)}  \in D$. 
	Fixing $c_{\mathbf{K}}=\alpha^{-1}\in (0,1)$ and 
	noticing that
	\begin{eqnarray*}
		\mathbf{g}(\mathbf{K})=c_{\mathbf K} \mathbf{T_K}+ (1-c_\mathbf{K})\mathbf{f(K)} & = & 
		\alpha^{-1} \left( \alpha \mathbf{K}+(1-\alpha)\mathbf{f(K)} \right) + (1-\alpha^{-1})\mathbf{f(K)} \\
		& = & \mathbf{K}+ \frac{1-\alpha}{\alpha} \mathbf{f(K)} + \frac{\alpha-1}{\alpha} \mathbf{f(K)} = \mathbf{K}
	\end{eqnarray*}
	concludes the proof.\qed
\end{proof}

\begin{proof} {\em of Lemma 3}
	We note that it is sufficient to consider only controls $c \in (0,1)$, since for $c=0$ we have the original map.
	Let $\phi_1(\mathbf{x})=c_1 \mathbf{T}_1+(1-c_1)\mathbf{x}$ be applied first, and next another argument transformation 
	$\phi_2(\mathbf{x})=c_2 \mathbf{T}_2+(1-c_2)\mathbf{x}$. Then
	\begin{eqnarray*}
		\phi_2\left(\phi_1(\mathbf{f}(\mathbf{x})) \right) & = & c_2\mathbf{T}_2+(1-c_2) \left[c_1\mathbf{T}_1+(1-c_1) \mathbf{f}(\mathbf{x})\right]
		\\ &=& c_2 \mathbf{T}_2+(1-c_2)c_1 \mathbf{T}_1+(1-c_1)(1-c_2)\mathbf{f}(\mathbf{x})= c\mathbf{T}+(1-c) \mathbf{f}(\mathbf{x}),
	\end{eqnarray*}
	where 
	$$ c=c_1(1-c_2)+c_2, \quad \mbox{ and } \quad \mathbf{T}=\frac{c_1(1-c_2)}{c_1(1-c_2)+c_2} \mathbf{T}_1+\frac{c_2}{c_1(1-c_2)+c_2} \mathbf{T}_2.$$
	Since  $c_1,c_2 \in (0,1)$,
	also $c=c_1(1-c_2)+c_2 \in (0,1)$. On the other hand, the vector $\mathbf{T}=\alpha \mathbf{T}_1+(1-\alpha) \mathbf{T}_2$,
	where $\alpha = c_1(1-c_2)/c\in (0,1)$, therefore by the convexity of $D$ the target $\mathbf{T} \in D$.\qed
\end{proof}

\begin{proof} {\em of Theorem 1}
Recall that, by linearization, any equilibrium $\mathbf{p}_c$ of VMTOC is asymptotically stable if the spectral radius  
of the Jacobian matrix  $J\mathbf g$ of  $\mathbf{g(x)}=c\mathbf{T}+(1-c)\mathbf{f(x)}$ 
at $\mathbf{p}_c$ is smaller than $1$.

By a well-known bound for the eigenvalues of a matrix (see e.g. \cite[Corollary 6.1.5]{horn2012matrix}), 
we have that any eigenvalue $\lambda$ of  $J\mathbf g(\mathbf{p}_c)$
satisfies
\[
|\lambda | \le  (1-c) \min \left \{\max_{j=1,\dots, d} \sum_{i=1}^{d} 
\left | \frac{\partial f_{j}}{\partial x_i}(\mathbf{p}_c) \right |, \max_{j=1,\dots,d}\sum_{i=1}^{d} \left | \frac{\partial f_{i}}{\partial x_j}(\mathbf{p}_c) \right |   \right \}.
\]
Therefore, for $c\in (c^*,1)$  the  eigenvalues of $J\mathbf g(\mathbf{p}_c)$ have modulus smaller than $1$.  \qed 
\end{proof}

\begin{proof} {\em of Theorem 2}
	If $L$ in \eqref{3} satisfies $L \in (0,1)$ and $\mathbf{x}_n$ is a solution of \eqref{2} with an initial 
	condition $\mathbf{x}_0 \in D$  and $c \in [0,1)$, then for  $n\in\mathbb{N}$
	$$
	\| \mathbf{x}_{n+1} - \mathbf{K} \| = \|c\mathbf{K} +(1-c) \mathbf{f}(\mathbf{x}_{n}) - \mathbf{K} \| =(1-c)\|  \mathbf{f}(\mathbf{x}_{n})-\mathbf{K} \| \leq L \| \mathbf{x}_{n}-\mathbf{K} \|.
	$$
	 For any $\varepsilon \in (0, \|\mathbf{x}_0 -\mathbf{K} \|)$, we have 
	$$\| \mathbf{x}_n - \mathbf{K} \| \leq L^n \| \mathbf{x}_0 -\mathbf{K} \| < \varepsilon \mbox{~~whenever~~~} n> \left. \ln \left( \frac{\varepsilon}{\| \mathbf{x}_0 -\mathbf{K} \|} \right) 
	\right/ \ln L,
	$$
	so $\mathbf{x}_n \to \mathbf{K}$ as $n \to \infty$.
	
	Next, let  $L\ge 1$. Denote
	\begin{equation}
	\label{cstar}
	c^*= \left\{ \begin{array}{ll} 0, & ~~L=1, \\ {\displaystyle 1 - \frac{1}{L}}, & ~~L>1,
	\end{array} \right.
	\end{equation}
	and assume that  $c \in (c^*,1) \subseteq (0,1)$. Then $(1-c)L<1$, denote $\theta=(1-c)L \in (0,1)$. We have 
$1-c=\theta/L$, $c\mathbf{T}=c\mathbf{K}$ and
	\begin{eqnarray*}
		\| \mathbf{x}_{n+1}-\mathbf{K} \| & = & \|(1-c)\mathbf{f}(\mathbf{x}_n)+c\mathbf{K-K}\|= 
\|(1-c)[\mathbf{f}(\mathbf{x}_n)-\mathbf{K}]\| = (1-c)\| \mathbf{f}(\mathbf{x}_n) -\mathbf{K} \|
		\nonumber \\ &=& \frac{\theta}{L} \|\mathbf{f}(\mathbf{x}_n)-\mathbf{K} \|  \leq \frac{\theta}{L} L \| \mathbf{x}_n-\mathbf{K} \| = \theta \| \mathbf{x}_n-\mathbf{K} \|.  
		\nonumber 
	\end{eqnarray*}
	By induction,
	\begin{equation*}
	\label{contraction}
	\| \mathbf{x}_{n+1}-\mathbf{K} \| \leq \theta^n  \| \mathbf{x}_0-\mathbf{K} \|.
	\end{equation*}
	Therefore, $\| \mathbf{x}_{n+1}-\mathbf{K} \| < \varepsilon < \| \mathbf{x}_0 -\mathbf{K} \|$ for 
	any $\displaystyle n > \left. \ln \left( \frac{\varepsilon}{\| \mathbf{x}_0 -\mathbf{K} \|} \right) \right/ \ln \theta$.
	Thus, $\lim\limits_{n \to \infty} \mathbf{x}_n=\mathbf{K}$. Moreover, $\|\mathbf{x}_{n}-\mathbf{K} \|$ decays at least geometrically,
	which concludes the proof. \qed
\end{proof}

\begin{proof} {\em of Lemma 4}
	Let $M>0$ be an upper bound of $\mathbf{f}$:
	\begin{equation}   
	\label{bound}
	\| \mathbf{f(x)} \| \leq M, ~~\mathbf{x} \in D. 
	\end{equation} 
	Since $\mathbf{f}$ is locally Lipschitz continuous,  for $\mathbf x$ in the intersection of the ball $\|\mathbf{ x-K} \| \leq \|\mathbf{K}\|$ with $D$,
	there is a constant $\tilde{L}>0$ such that $\| \mathbf{f(x)-K} \| \leq \tilde{L} \| \mathbf{x-K}\|$.
	Next, let $\mathbf{x}\in D$, $\| \mathbf{x-K} \| > \|\mathbf{K}\|$. Then, by \eqref{bound},  
	\begin{align*}
	\|\mathbf{f(x)-K} \|  & \leq  \| \mathbf{f(x)}\| + \| \mathbf{K} \| \leq M + \| \mathbf{K} \|   =  \left( \frac{M}{\| \mathbf{K} \|}+1 \right) \| \mathbf{K} \|  <   \left( \frac{M}{\| \mathbf{K} \|}+1 \right) \| \mathbf{x-K} \|.
	\end{align*}
	Finally, choosing
	\begin{equation} 
	\label{bound1}
	L= \max \left\{ \tilde{L}, \frac{M}{\| \mathbf{K} \|}+1 \right\},
	\end{equation}
	we obtain that inequality \eqref{3} is satisfied for any $\mathbf{x} \in D$. \qed
\end{proof}

\begin{proof} {\em of Theorem 3}
	By Lemma~\ref{lemma2} there exist $c_{\mathbf K}\in (0,1)$ and $\mathbf{T_K}\in D$ such that $\mathbf{K}$ is an equilibrium  of 
	$\mathbf{g}(\mathbf{x})=c_\mathbf{K} \mathbf{T_K}+(1-c_\mathbf{K})\mathbf{g}(\mathbf{x})$. 
	By Lemma~\ref{lemma4}, $\mathbf{f}$ satisfies \eqref{3} with some constant $L_1$ instead of $L$. Then
	\begin{eqnarray*}
		\| \mathbf{g}(\mathbf{x}) - \mathbf{K} \| & = & \| \mathbf{g}(\mathbf{x}) - \mathbf{g}(\mathbf{K}) \| = \| c_\mathbf{K} \mathbf{T_K}+(1-c_\mathbf{K})\mathbf{f(x)} - c_\mathbf{K} \mathbf{T_K}+(1-c_\mathbf{K})\mathbf{f(K)} \|  \\
		& = & (1-c_\mathbf{K}) \| \mathbf{f(x) - f(K)} \| \leq (1-c_{\mathbf{K}}) L_1 \| \mathbf{x - K} \|, 
	\end{eqnarray*}
	thus condition~\eqref{3} holds for $\mathbf{g}$ with $L=(1-c_{\mathbf K})L_1$.
	
	Now, applying Theorem~\ref{th1}, we obtain that there exists $c^* \in [0,1)$ such that for $\hat{c} \in (c^*,1)$ and $\mathbf{T=K}$,
	all solutions of \eqref{2} with $\mathbf{g}$ instead of $\mathbf{f}$, $\hat{c}$ instead of $c$ and $\mathbf{x}_0 \in D$ converge to 
	$\mathbf{K}$.
	By Lemma~\ref{lemma1}, a combination of two VMTOCs is a VMTOC. Thus, if we 
	choose $c=c_\mathbf{K}(1-\hat{c})+\hat{c}$, where $\hat{c} \in (c^*,1)$, $c_\mathbf{K}$ is defined above, and
	$$
	\mathbf{T}=\frac{c_\mathbf{K}(1-\hat{c})}{c_\mathbf{K}(1-\hat{c})+\hat{c}} \mathbf{T_K}+\frac{\hat{c}}{c_\mathbf{K}(1-\hat{c})+\hat{c}} \mathbf{K},
	$$
	we get that $\mathbf{K}$ is a global attractor of the combination of VMTOCs.\qed
\end{proof}

\begin{acknowledgements}

The authors are grateful to the anonymous referees whose valuable comments contributed to the presentation of the 
results of the paper.
\end{acknowledgements}



\begin{thebibliography}{10}
	\providecommand{\url}[1]{{#1}}
	\providecommand{\urlprefix}{URL }
	\expandafter\ifx\csname urlstyle\endcsname\relax
	\providecommand{\doi}[1]{DOI~\discretionary{}{}{}#1}\else
	\providecommand{\doi}{DOI~\discretionary{}{}{}\begingroup
		\urlstyle{rm}\Url}\fi
	
	\bibitem{bailey1996local}
	Bailey, B.A.: Local {L}yapunov exponents: predictability depends on where you
	are.
	\newblock Nonlinear Dynamics and Economics, Cambridge University Press,
	Cambridge pp. 345--359 (1996)
	
	\bibitem{Chaos2014}
	Braverman, E., Chan, B.: Stabilization of prescribed values and periodic orbits
	with regular and pulse target oriented control.
	\newblock Chaos \textbf{24}(1), 013119, 7 pp. (2014) 
	
	\bibitem{braverman2015stabilization}
	Braverman, E., Franco, D.: Stabilization with target oriented control for
	higher order difference equations.
	\newblock Physics Letters A \textbf{379}(16), 1102--1109 (2015)
	
	\bibitem{braverman2012stabilization}
	Braverman, E., Liz, E.: On stabilization of equilibria using predictive control
	with and without pulses.
	\newblock Computers \& Mathematics with Applications \textbf{64}(7), 2192--2201
	(2012)
	
	\bibitem{capeans2014less}
	Cape{\'a}ns, R., Sabuco, J., Sanju{\'a}n, M.A.: When less is more: Partial
	control to avoid extinction of predators in an ecological model.
	\newblock Ecological Complexity \textbf{19}, 1--8 (2014)
	
	\bibitem{Costantino389}
	Costantino, R.F., Desharnais, R.A., Cushing, J.M., Dennis, B.: Chaotic dynamics
	in an insect population.
	\newblock Science \textbf{275}(5298), 389--391 (1997).
	
	
	\bibitem{cushing2002chaos}
	 Cushing, J.M.,  Costantino, R.F., Dennis, B., Desharnais, R.,  Henson, S.M.:
	\newblock {\em Chaos in ecology: experimental nonlinear dynamics}, volume~1 of
	  {\em Theoretical Ecology Series}.
	\newblock Academic Press, Burlington, 2005.
	
	\bibitem{Dattani}
	Dattani, J., Blake, J.C., Hilker, F.M.: Target-oriented chaos control.
	\newblock Physics Letters  A \textbf{375}(45), 3986--3992 (2011)
	
	\bibitem{desharnais2001chaos}
	Desharnais, R.A., Costantino, R.F., Cushing, J.M., Henson, S.M., Dennis, B.:
	Chaos and population control of insect outbreaks.
	\newblock Ecology Letters \textbf{4}(3), 229--235 (2001)
	
	\bibitem{TPC}
	Franco, D., Liz, E.: A two-parameter method for chaos control and targeting in
	one-dimensional maps.
	\newblock Int. J. Bifurcation Chaos \textbf{23}(01), 1350003, 11 pp. (2013).
	
	\bibitem{fryxell2005evaluation}
	Fryxell, J., Smith, I., Lynn, D.: Evaluation of alternate harvesting strategies
	using experimental microcosms.
	\newblock Oikos \textbf{111}(1), 143--149 (2005)
	
	\bibitem{granas2013fixed}
	Granas, A., Dugundji, J.: Fixed point theory.
	\newblock Springer (2013)
	
	\bibitem{gm}
	G{\"u}{\'e}mez, J., Mat\'{\i}as, M.A.: Control of chaos in unidimensional maps.
	\newblock Physics Letters A \textbf{181}(1), 29--32 (1993)
	
	\bibitem{hassell1976patterns}
	Hassell, M.P., Lawton, J.H., May, R.: Patterns of dynamical behaviour in
	single-species populations.
	\newblock The Journal of Animal Ecology, 471--486 (1976)
	
	\bibitem{Henon1976}
	H{\'e}non, M.: A two-dimensional mapping with a strange attractor.
	\newblock Communications in Mathematical Physics \textbf{50}(1), 69--77 (1976)
	
	\bibitem{hilker2005control}
	Hilker, F.M., Westerhoff, F.H.: Control of chaotic population dynamics:
	Ecological and economic considerations.
	\newblock Beitraege des Instituts fuer Umweltsystemforschung (32), 22 pp. (2005) 
	
	\bibitem{horn2012matrix}
	Horn, R.A., Johnson, C.R.: Matrix analysis.
	\newblock Cambridge University Press (2012)
	
	\bibitem{kot1993population}
	Kot, J.L., May, R., Schwinning, S., Sugihara, G., Tidd, Ð., Turchin, P.,
	Winfree, T.: Population oscillations of boreal rodents: regulation by
	mustelid predators leads to chaos.
	\newblock Nature \textbf{364}, 232--235 (1993)
	
	\bibitem{levin1976note}
	Levin, S.A., May, R.M.: A note on difference-delay equations.
	\newblock Theoretical Population Biology \textbf{9}(2), 178--187 (1976)
	
	\bibitem{liz2010control}
	Liz, E.: How to control chaotic behaviour and population size with proportional
	feedback.
	\newblock Physics Letters A \textbf{374}(5), 725--728 (2010)
	
	\bibitem{LizPotzsche}
	Liz, E., P{\"o}tzsche, C.: Pbc-based pulse stabilization of periodic orbits.
	\newblock Physica D: Nonlinear Phenomena \textbf{272}, 26--38 (2014)

        \bibitem{Marotto}
        Marotto, F.R.: Snap-back repellers imply chaos in ${\mathbb R}^n$.
        \newblock Journal of  Mathematical Analysis and Applications \textbf{63}, 199--223 (1978)
	
	\bibitem{may1974biological}
	May, R.M.:
	\newblock Biological populations with nonoverlapping generations: stable
	  points, stable cycles, and chaos.
	\newblock {Science}, \textbf{186}(4164), 645--647 (1974).
	
	\bibitem{May1976}
		May, R.M.: Simple mathematical models with very complicated dynamics.
		\newblock Nature \textbf{261}(5560), 459--467 (1976)
		
	
	\bibitem{morris1990problems}
	 Morris, W.F.:
	\newblock Problems in detecting chaotic behavior in natural populations by
	  fitting simple discrete models.
	\newblock {Ecology}, \textbf{71}(5), 1849--1862 (1990).
	
	
	
	\bibitem{parthasarathy1995controlling}
	Parthasarathy, S., Sinha, S.: Controlling chaos in unidimensional maps using
	constant feedback.
	\newblock Physical Review E \textbf{51}(6), 6239--6242 (1995).
	
	\bibitem{peran2015global}
	Per{\'a}n, J., Franco, D.: Global convergence of the second order Ricker
	equation.
	\newblock Applied Mathematics Letters \textbf{47}, 47--53 (2015)
	
	\bibitem{sah2013stabilizing}
	Sah, P., Salve, J.P., Dey, S.: Stabilizing biological populations and
	metapopulations through adaptive limiter control.
	\newblock Journal of theoretical biology \textbf{320}, 113--123 (2013)
	
	\bibitem{segura2016adaptive}
	Segura, J., Hilker, F.M., Franco, D.: Adaptive threshold harvesting and the
	suppression of transients.
	\newblock Journal of theoretical biology \textbf{395}, 103--114 (2016)
	
	\bibitem{sole1999controlling}
	Sol{\'e}, R.V., Gamarra, J.G., Ginovart, M., L{\'o}pez, D.: Controlling chaos
	in ecology: from deterministic to individual-based models.
	\newblock Bulletin of Mathematical Biology \textbf{61}(6), 1187--1207 (1999)
	
	\bibitem{tung2014comparison}
	Tung, S., Mishra, A., Dey, S.: A comparison of six methods for stabilizing
	population dynamics.
	\newblock Journal of Theoretical Biology \textbf{356}, 163--173 (2014)
	
	\bibitem{tung2016simultaneous}
	Tung, S., Mishra, A., Dey, S.: Simultaneous enhancement of multiple stability
	properties using two-parameter control methods in {D}rosophila melanogaster.
	\newblock Ecological Complexity \textbf{26}, 128--136 (2016)
	
\end{thebibliography}

\end{document}